\documentclass[reqno,report]{amsart}
\usepackage{latexsym}
\usepackage{amssymb}
\usepackage{amsmath}
\usepackage{amsfonts}
\usepackage{amsthm}
\usepackage{amscd}
\usepackage{texdraw}
\usepackage{color}
\usepackage[centertags]{amsmath}

\hfuzz2pt 
\newlength{\defbaselineskip}
\setlength{\defbaselineskip}{\baselineskip}
\newcommand{\setlinespacing}[1]%
           {\setlength{\baselineskip}{#1 \defbaselineskip}}

\theoremstyle{plain}
\newtheorem{theorem}{Theorem}
\newtheorem{lemma}{Lemma}

\newtheorem{corollary}{Corollary}
\newtheorem{definition}{Definition}


{\allowdisplaybreaks
\begin{document}
\title[Symmetry and inverse-closedness of some $p$-Beurling algebras]{Symmetry and inverse-closedness of some $p$-Beurling algebras}

\author[P. A. Dabhi]{Prakash A. Dabhi}
\author[K. B. Solanki]{Karishman B. Solanki}
\address{Department of Mathematics, Institute of Infrastructure Technology Research and Management(IITRAM), Maninagar (East), Ahmedabad - 380026, Gujarat, India}

\email{lightatinfinite@gmail.com}
\email{karishsolanki002@gmail.com}
\thanks{The authors are very much grateful to Professor W. \.Zelazko for his help by sending his book. The first author would like to thank SERB, India, for the MATRICS grant no. MTR/2019/000162. The second author gratefully acknowledges Junior Research Fellowship (NET) from CSIR, India.}

\subjclass[2010]{47B37, 43A15, 46K05, 47G10}

\keywords{$p$-Banach algebra, Hulanicki's lemma, Barnes' lemma, symmetry, inverse-closedness, weight, twisted convolution}

\date{}

\dedicatory{}

\commby{}

\begin{abstract}
Let $(G,d)$ be a metric space with the counting measure $\mu$ satisfying some growth conditions. Let $\omega(x,y)=(1+d(x,y))^\delta$ for some $0<\delta\leq1$. Let $0<p\leq1$. Let $\mathcal A_{p\omega}$ be the collection of kernels $K$ on $G\times G$ satisfying $\max\{\sup_x\sum_y |K(x,y)|^p\omega(x,y)^p, \sup_y\sum_x |K(x,y)|^p\omega(x,y)^p\}<\infty$. Each $K \in \mathcal A_{p\omega}$ defines a bounded linear operator on $\ell^2(G)$. If in addition, $\omega$ satisfies the weak growth condition, then we show that $\mathcal A_{p\omega}$ is inverse closed in $B(\ell^2(G))$. We shall also discuss inverse-closedness of $p$-Banach algebra of infinite matrices over $\mathbb Z^d$ and the $p$-Banach algebra of weighted $p$-summable sequences over $\mathbb Z^{2d}$ with the twisted convolution. In order to show these results, we prove Hulanicki's lemma and Barnes' lemma for $p$-Banach algebras.
\end{abstract}
\maketitle

\section{Introduction}
Let $0<p\leq1$, and let $\mathcal{A}$ be an algebra. A mapping $\|\cdot\| : \mathcal{A} \to [0,\infty)$ is a \emph{$p$-norm} \cite{Ze} on $\mathcal{A}$ if the following conditions hold for all $x,y\in \mathcal{A}$ and $\alpha \in \mathbb{C}$.
\begin{enumerate}
\item $\|x\|=0$ if and only if $x=0$;
\item $\|x+y\|\leq\|x\|+\|y\|$;
\item $\|\alpha x\|= |\alpha|^p \|x\|$;
\item $\|xy\|\leq\|x\|\|y\|$.
\end{enumerate}
If $\mathcal{A}$ is complete in the $p$-norm, then $(\mathcal{A},\|\cdot\|)$ is a \emph{$p$-Banach algebra} \cite{Ze}. When $p=1$, $\mathcal A$ is a Banach algebra.

A $p$-normed (Banach) $\ast$-algebra is a $p$-normed (Banach) algebra along with an isometric involution $\ast$. A \emph{$p$-$C^\ast$-algebra} is a $C^\ast$-algebra $(\mathcal A,\|\cdot\|)$ with the \emph{$p$-$C^\ast$-norm} $|x|=\|x\|^p\;(x \in \mathcal A)$. Let $\mathcal{A}$ be a $p$-Banach algebra with unit $e$, and let $x\in\mathcal{A}$. The set $\sigma_\mathcal{A}(x)=\{\lambda\in\mathbb{C}: \lambda e-x \ \text{is not invertible in}\ \mathcal{A} \}$ is the \emph{spectrum} of $x$ in $\mathcal{A}$ and the number $r_\mathcal{A}(x)=\sup\{|\lambda|^p:\lambda\in\sigma_\mathcal{A}(x)\}$ is the \emph{spectral radius} of $x$. The spectral radius formula gives $r_\mathcal{A}(x)=\lim_{n\to\infty}\|x^n\|^\frac{1}{n}$ \cite{Ze}. We shall just write $\sigma(x)$ and $r(x)$ when the algebra in consideration is clear.

Let $\mathcal{A}$ be a commutative $p$-Banach algebra. A nonzero linear map $\varphi:\mathcal A \to \mathbb C$ satisfying $\varphi(ab)=\varphi(a)\varphi(b)\;(a,b \in \mathcal A)$ is a \emph{complex homomorphism} on $\mathcal A$. Let $\Delta(\mathcal A)$ be the collection of all complex homomorphisms on $\mathcal A$. For $a \in \mathcal A$, let $\widehat a:\Delta(\mathcal A)\to \mathbb C$ be $\widehat a(\varphi)=\varphi(a)\;(\varphi\in \Delta(\mathcal A))$. The smallest topology on $\Delta(\mathcal A)$ making each $\widehat a$, $a\in \mathcal A$, continuous is the \emph{Gel'fand topology} on $\Delta(\mathcal A)$ and $\Delta(\mathcal A)$ with the Gel'fand topology is the \emph{Gel'fand space} of $\mathcal{A}$. For more details on it refer \cite{Ge, Ze}.

Let $\mathcal{H}$ be a Hilbert space. Then $B(\mathcal{H})$, the collection of all bounded linear operators on $\mathcal{H}$, is a $C^\ast$-algebra with the operator norm $\|T\|_{op}=\sup\{\|T(x)\|:x\in\mathcal{H}, \|x\|\leq1\}$ for all $T \in B(\mathcal H)$.

Note that for given $0<p\leq1$ and a normed (Banach) algebra $\mathcal{A}$ with norm $\|\cdot\|$, we may consider the $p$-norm, $\|\cdot\|_p$, on $\mathcal{A}$ given $\|x\|_p=\|x\|^p \ (x\in\mathcal{A})$ making $\mathcal{A}$ a $p$-normed ($p$-Banach) algebra without changing the topology of $\mathcal{A}$. The fact that $(a+b)^p\leq a^p+b^p$ for all $a,b\in [0,\infty)$ and $0<p\leq 1$ will be used here and many times in this paper. All algebras considered here are complex algebras, i.e., over the complex field $\mathbb{C}$.

In \cite{Hu}, Hulanicki proved that if $\mathcal{A}$ is a Banach $\ast$-algebra, $S$ is a subalgebra of $\mathcal{A}$ (not necessarily closed) and if $T:\mathcal{A}\to B(\mathcal{H})$ is a faithful representation for some Hilbert space $\mathcal{H}$ such that $\|T_x\|=\lim_{n\to\infty}\|x^n\|^\frac{1}{n}$ for all $x=x^\ast\in S$, then $\sigma_\mathcal{A}(x)=\sigma(T_x)$ for all $x=x^\ast\in S$. The corrected proof of this theorem can be found in \cite{Fe}. We prove this result for $p$-Banach algebras.

Let $(G,d)$ be a metric space, and let $\mu$ be a measure on $G$. For $\delta>0$, let $\Gamma[\delta]=\{(x,y) \in G \times G : d(x,y)\leq\delta \}$, and for $x\in G$, let $\Gamma_x[\delta]=\{y\in G: d(x,y)\leq\delta \}$. Assume that there are constants $C>0, b>0$ such that $\mu(\Gamma_x[\delta])\leq C\delta^b$ for all $x\in G$ and $\delta>0$. Let $0<\delta\leq1$ be fixed, and let $\omega:G\times G \to [1,\infty)$ be $\omega(x,y)=(1+d(x,y))^\delta$. Let $0<p\leq 1$, and let $\mathcal A_{p\omega}$ be the collection all complex valued measurable functions $K=K(x,y)$ on $G \times G$ such that $$\|K\|_{p\omega}=\max\Big\{\sup_x \int_G |K(x,y)|^p\omega(x,y)^pd\mu(y),\sup_y \int_G|K(x,y)|^p\omega(x,y)^pd\mu(x)\Big\}<\infty.$$ Note that $\mathcal A_{1\omega}$ is a Banach $\ast$-algebra with the above norm, the convolution multiplication $$(K\star J)(x,y)=\int_G K(x,z)J(z,y)d\mu(z)$$ and the involution $K \mapsto K^\ast$, where $K^\ast(x,y)=\overline{K(y,x)}$. By \cite{Ba}, $K$ defines a bounded linear operator $K_2$ on $L^2(G)$ by $K_2(f)(x)=\int_G f(y)K(x,y)d\mu(y)$ for all $f \in L^2(G)$. Barnes proved in \cite{Ba} that the spectrum of $K$ as an element of $\mathcal A_{1\omega}$ is same as the spectrum of $K_2$ in $B(L^2(G))$.

Let $0<p<1$, and let $K, J \in \mathcal A_{p\omega}$. Then $|\int_G K(x,z)J(z,y) d\mu(z)|^p$ may not be smaller than $\int_G|K(x,z)|^p|J(z,y)|^p d\mu(z)$. So, if we want this inequality to remain true or if we want $\mathcal A_{p\omega}$ to be an algebra, then we should take $\mu$ to be the counting measure. One more reason for taking $\mu$ to be the counting measure on $G$ is as follows. Let $G$ be a locally compact group with the Haar measure $\mu$, let $\omega$ be a measurable weight on $G$ and let $L^p(G,\omega)$ be the collection of all measurable functions on $G$ satisfying $\int_G |f|^p\omega^p d\mu<\infty$. Then by \cite{Ze}, $L^p(G)$ is closed under convolution if and only if $G$ is a discrete group and by \cite{Bh}, $L^p(G,\omega)$ is closed under convolution if and only if $G$ is a discrete group.

So, we shall consider the counting measure $\mu$ on a metric space $G$. In this case, $\mathcal A_{p\omega}$, $0<p\leq 1$, will be the collection of all functions $K:G \times G \to \mathbb C$ satisfying $$\|K\|_{p\omega}=\max\Big\{\sup_x \sum_y |K(x,y)|^p\omega(x,y)^p, \sup_y \sum_x |K(x,y)|^p\omega(x,y)^p\Big\}<\infty.$$ Then $\mathcal A_{p\omega}$ is a $p$-Banach $\ast$-algebra with the above norm, the convolution $$(K\star J)(x,y)=\sum_z K(x,z)J(z,y)\quad(K, J \in \mathcal A_{p\omega}, (x,y)\in G \times G)$$ and the involution $K^\ast(x,y)=\overline{K(y,x)}$. We shall extend the Barnes' lemma for the case $0<p<1$.

Let $d\in \mathbb N$, and let $\omega$ be an admissible weight on $\mathbb Z^d$ satisfying weak growth condition, i.e., there is a constant $C>0$ and there is $0<\delta\leq 1$ such that $\omega(x)\geq C(1+|x|)^\delta$ for all $x$. We consider the $p$-Banach $\ast$-algebra $\mathcal A_{p\omega}$ of infinite matrices $A=(a_{kl})_{k,l \in \mathbb Z^d}$ satisfying $$\|A\|_{p\omega}=\max\Big\{\sup_{k \in \mathbb Z^d}\sum_{l \in \mathbb Z^d}|a_{kl}|^p \omega(k-l)^p, \sup_{l \in \mathbb Z^d}\sum_{k \in \mathbb Z^d}|a_{kl}|^p \omega(k-l)^p\Big\}<\infty.$$ If $A \in \mathcal A_{p\omega}$, then it defines a bounded linear operator on $\ell^2(\mathbb Z^d)$. We show that $\mathcal A_{p\omega}$ is inverse closed in $B(\ell^2(\mathbb Z^d))$.

Let $0<p\leq 1$, $d \in \mathbb N$, and let $\omega$ be an admissible weight on $\mathbb Z^{2d}$ satisfying the weak growth condition. Let $\ell^p(\mathbb Z^{2d},\omega)$ be the collection of all sequences $a=(a_{kl})_{k,l \in \mathbb Z^d}$ satisfying $\|a\|=\sum_{k,l \in \mathbb Z^d}|a_{kl}|^p\omega(k-l)^p<\infty$. Let $\theta >0$. The twisted convolution of two sequences $a=(a_{kl})_{k,l \in \mathbb Z^d}$ and $b=(b_{kl})_{k,l \in \mathbb Z^d}$ in $\ell^p(\mathbb Z^{2d},\omega)$ is given by $$(a\star_\theta b)(m,n) = \sum_{k,l\in\mathbb{Z}^d} a_{kl}b_{m-k,n-l}e^{2\pi i\theta(m-k)\cdot l}.$$ Then $\ell^p(\mathbb Z^{2d},\omega)$ is a $p$-Banach $\ast$-algebra with the twisted convolution and the involution $a^\ast_{kl}=\overline{a_{-k,-l}}e^{2\pi i \theta k\cdot l}$ for $a=(a_{kl})_{k,l\in\mathbb{Z}^d}\in\ell^p(\mathbb{Z}^{2d},\omega)$. Each $a \in \ell^p(\mathbb Z^{2d},\omega)$ defines a convolution operator $L_a$ on $\ell^2(\mathbb Z^{2d})$ given by $L_a(b)=a\star_\theta b\;(b \in \ell^2(\mathbb Z^{2d}))$. We show that $L_a$ is invertible in $B(\ell^2(\mathbb Z^{2d}))$ if and only if $a$ is invertible in $\ell^p(\mathbb Z^{2d},\omega)$ and in this case, $L_a^{-1}=L_{a^{-1}}$.

A $p$-Banach $\ast$-algebra $\mathcal{A}$ is a \emph{symmetric} if $\sigma(aa^\ast)\subset [0,\infty)$ for all $a\in\mathcal{A}$ or equivalently $\sigma(a)\in\mathbb{R}$ for all $a=a^\ast\in\mathcal{A}$. Let $\mathcal A$ and $\mathcal B$ be $p$-Banach algebras, $\mathcal A \subset \mathcal B$, and let $\mathcal A$ and $\mathcal B$ have the same unit. Then $\mathcal A$ is \emph{inverse closed} (\emph{spectrally invariant}) in $\mathcal B$ if $a \in \mathcal A$ and $a^{-1}\in \mathcal B$ imply $a^{-1}\in \mathcal A$. The property of symmetry is important itself in theory of Banach algebras as symmetric Banach algebras has many properties of $C^\ast$-algebras. Even though symmetry is defined for a given algebra and inverse-closedness gives information about relation between two nested algebras, these two topics are closely related to such a extent that most of the time the symmetry of a Banach algebra $\mathcal{A}$ is shown using inverse closedness of $\mathcal{A}$ in some $C^\ast$-algebra and it is done using the Hulanicki's lemma.

With this in consideration, first we prove Hulanicki's lemma for $p$-Banach algebras in section 2. Barnes' lemma for $p$-Banach algebras is proved in section 3. In section 4, we shall apply these lemmas to prove inverse-closedness of $p$-Banach algebra of infinite matrices over $\mathbb Z^d$ in $B(\ell^2(\mathbb Z^d))$ and the inverse-closedness of the $p$-Banach algebra $\ell^p(\mathbb Z^{2d})$ with the twisted convolution in $B(\ell^2(\mathbb Z^{2d}))$.

\section{Hulanicki's lemma for $p$-Banach algebras}
The following theorem is Hulanicki's lemma \cite[Proposition 2.5]{Hu} for $p$-Banach algebras. See \cite[6.1 Proposition]{Fe} for a proof of it for Banach algebras, i.e., for the case of $p=1$.
\begin{theorem}\label{hul}
Let $0<p\leq 1$. Let $\mathcal{A}$ be a $p$-Banach $\ast$-algebra, $S$ be a $\ast$-subalgebra of $\mathcal{A}$, and let $T$ be a faithful $\ast$-representation of $\mathcal{A}$ on Hilbert space $\mathcal{H}$ satisfying $$\|T_x\|_{op}^p=\lim_{n\to\infty}\|x^n\|^{\frac{1}{n}}\quad(x=x^\ast \in S).$$ If $\mathcal{A}$ has a unit $e$, then assume in addition that $T_e=I$, the identity operator in $B(\mathcal{H})$. If $x=x^\ast\in S$, then $\sigma_\mathcal{A}(x)=\sigma(T_x).$
\end{theorem}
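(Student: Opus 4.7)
The plan is to verify the two inclusions $\sigma(T_x)\subseteq\sigma_\mathcal{A}(x)$ and $\sigma_\mathcal{A}(x)\subseteq\sigma(T_x)$ separately. The first is immediate, since $T$ is a unital algebra homomorphism: if $\lambda e-x$ has inverse $u$ in $\mathcal{A}$, then $T_u$ inverts $\lambda I-T_x$. For the reverse inclusion the idea is to convert the hypothesis into a polynomial sup-norm identity and then separate $\sigma(T_x)$ from any supposed extra point in $\sigma_\mathcal{A}(x)$ by a real polynomial. For every real polynomial $q$ with $q(0)=0$, the element $q(x)$ lies in $S$ (because $x^k\in S$ for $k\ge 1$ and $S$ is a subalgebra) and is self-adjoint; combining the hypothesis with the (purely algebraic) polynomial spectral mapping theorem in $\mathcal{A}$ and the spectral theorem for the self-adjoint operator $T_x$ in $B(\mathcal{H})$ yields
\[
\sup_{\mu\in\sigma_\mathcal{A}(x)}|q(\mu)|\;=\;\|T_{q(x)}\|_{op}\;=\;\|q(T_x)\|_{op}\;=\;\sup_{t\in\sigma(T_x)}|q(t)|.
\]
It then suffices, for each candidate $\mu_0\in\sigma_\mathcal{A}(x)\setminus\sigma(T_x)$, to exhibit a real polynomial $q$ with $q(0)=0$ violating this identity.

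If $\mu_0=\alpha+i\beta$ has $\beta\neq 0$, I would take $M=\|T_x\|_{op}$, $c=2(M+|\alpha|)^2$, and
\[
q(z)=(c-(z-\alpha)^2)^n-(c-\alpha^2)^n
\]
with $n$ large. This $q$ has real coefficients and $q(0)=0$; one computes $|q(\mu_0)|=(c+\beta^2)^n-(c-\alpha^2)^n$. For $t\in[-M,M]$ the choice of $c$ gives $(t-\alpha)^2\le(M+|\alpha|)^2\le c$, hence $c-(t-\alpha)^2\in[0,c]$ and $|q(t)|\le c^n$ on $\sigma(T_x)$. Since $(1+\beta^2/c)^n\to\infty$, for $n$ large one has $|q(\mu_0)|>c^n$, contradicting the identity. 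If instead $\mu_0\in\mathbb{R}\setminus\{0\}$, the plan is to apply Weierstrass approximation on the compact set $K=\sigma(T_x)\cup\{0,\mu_0\}$, where $\mu_0$ is isolated: for any $\epsilon<1/6$ there is a real polynomial $f$ with $|f(\mu_0)-1|<\epsilon$ and $|f|<\epsilon$ on $\sigma(T_x)\cup\{0\}$, and then $q(t)=f(t)-f(0)$ satisfies $q(0)=0$, $|q(\mu_0)|>2/3$, and $|q|<1/3$ on $\sigma(T_x)$, again violating the identity. Together these two cases give $\sigma_\mathcal{A}(x)\subseteq\sigma(T_x)\cup\{0\}$.

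The main obstacle is the residual possibility $0\in\sigma_\mathcal{A}(x)\setminus\sigma(T_x)$, which polynomial separation cannot touch because any polynomial with $p(0)=0$ automatically vanishes at $\mu_0=0$. To handle it I would pass from polynomials to the holomorphic functional calculus in the $p$-Banach algebra $\mathcal{A}$ (available through the Cauchy integral construction in \cite{Ze}), exploiting the faithfulness of $T$ that has not yet been used. Under the standing hypothesis of this case, the containment $\sigma_\mathcal{A}(x)\subseteq\sigma(T_x)\cup\{0\}$ makes $0$ an isolated point of $\sigma_\mathcal{A}(x)$; choose a small positively oriented circle $C$ around $0$ that is disjoint from $\sigma(T_x)$, and form the Riesz projection
\[
P_0\;=\;\frac{1}{2\pi i}\oint_C (ze-x)^{-1}\,dz\;\in\;\mathcal{A}.
\]
The holomorphic spectral mapping theorem gives $\sigma_\mathcal{A}(P_0)=\{0,1\}$, so $P_0\neq 0$. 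On the other hand, commuting the continuous $\ast$-representation $T$ under the integral yields $T(P_0)=(2\pi i)^{-1}\oint_C (zI-T_x)^{-1}\,dz$; because $\sigma(T_x)$ lies entirely outside $C$, the integrand $(zI-T_x)^{-1}$ is holomorphic on and inside $C$ and the integral vanishes by Cauchy's theorem, so $T(P_0)=0$. Faithfulness of $T$ then forces $P_0=0$, the sought contradiction, completing the proof that $\sigma_\mathcal{A}(x)=\sigma(T_x)$.
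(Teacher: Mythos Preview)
Your argument is correct in the unital case and takes a genuinely different route from the paper. The paper works via commutative Gelfand theory: it passes to the closed commutative $\ast$-subalgebra $\mathcal B$ generated by $x$, completes it in the $p$-$C^\ast$-seminorm $\mu(y)=\|T_y\|_{op}^p$, identifies $\Delta(\mathcal B)$ with $\Delta(\mathcal B^\mu)$, and reads the spectra off as ranges of Gelfand transforms, with a preliminary lemma showing that if $I$ lies in the operator-norm closure of $T(\mathcal B)$ then $\mathcal A$ was already unital. Your polynomial-separation-plus-Riesz-idempotent approach avoids Gelfand theory entirely, at the cost of invoking the holomorphic functional calculus in $p$-Banach algebras for the final step. (You also use without comment that $T$ is continuous, needed to pass $T$ through the contour integral; this is automatic via $\|T_a\|_{op}^{2p}=\|T_{a^\ast a}\|_{op}^p\le r_{\mathcal A}(a^\ast a)\le\|a\|^2$, so it is harmless.)

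The gap is the non-unital case, which the theorem explicitly allows. You open by calling $T$ ``a unital algebra homomorphism,'' but that is only assumed when $\mathcal A$ has a unit $e$ with $T_e=I$; if $\mathcal A$ is non-unital one must work in the unitization $\tilde{\mathcal A}$ with $\tilde T(a+\lambda\tilde e)=T_a+\lambda I$. More seriously, your Riesz idempotent $P_0=(2\pi i)^{-1}\oint_C(ze-x)^{-1}\,dz$ then lies only in $\tilde{\mathcal A}$, and $\tilde T$ need not be faithful on $\tilde{\mathcal A}$, so the step ``$T(P_0)=0$ and $T$ faithful $\Rightarrow P_0=0$'' is unavailable as written. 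The repair is short but must be supplied: writing $(ze-x)^{-1}=z^{-1}e+z^{-1}x(ze-x)^{-1}$ shows $P_0-e\in\mathcal A$, and $\tilde T(P_0)=0$ gives $T(P_0-e)=-I$; faithfulness of $T$ on $\mathcal A$ then forces $(e-P_0)b=b=b(e-P_0)$ for every $b\in\mathcal A$, so $e-P_0$ is a unit of $\mathcal A$, contradicting non-unitality. The paper handles this issue through its case analysis on whether $I\in\mathcal B^\mu$.
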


We shall require the following lemma.

\begin{lemma}
Let $0<p\leq 1$. Let $\mathcal A$ be a $p$-Banach $\ast$-algebra, let $\mathcal{B}$ be the $\|\cdot\|$-closure of some commutative $\ast$-subalgebra of $\mathcal{A}$, and let $T$ be a faithful $\ast$-representation of $\mathcal A$ on a Hilbert space $\mathcal H$ satisfying $\|T_x\|_{op}^p=\lim_{n\to\infty}\|x^n\|^{\frac{1}{n}}$ for all $x=x^\ast \in \mathcal B$. If $I$ is in the operator norm closure of $T(\mathcal{B})$, then there is $e\in\mathcal{B}$ such that $T_e=I$ and $\mathcal{A}$ is unital with $e$ as unit.
\end{lemma}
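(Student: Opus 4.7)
The plan is to first produce $e\in\mathcal B$ with $T_e=I$ and then use faithfulness of $T$ to deduce that $e$ is the unit of $\mathcal A$. The second step is immediate: if $T_e=I$, then for any $a\in\mathcal A$ the element $ea-a\in\mathcal A$ satisfies $T_{ea-a}=T_eT_a-T_a=I\cdot T_a-T_a=0$, and faithfulness of $T$ forces $ea=a$; symmetrically $ae=a$. So the whole task is to produce such an $e$.

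To construct $e$, I would first extract a self-adjoint approximating sequence. Since $I$ lies in the operator-norm closure of $T(\mathcal B)$, pick $(b_n)\subset\mathcal B$ with $\|T_{b_n}-I\|_{op}\to 0$, and replace $b_n$ by $(b_n+b_n^*)/2\in\mathcal B$ (using $I=I^*$ and that $T$ is a $*$-representation) to assume each $b_n=b_n^*$. Fix one such $b=b_n$ with $\|T_b-I\|_{op}<1/2$, so that $T_b$ is positive and invertible in $B(\mathcal H)$ with $\sigma_{B(\mathcal H)}(T_b)\subset[1/2,3/2]$. Inside the commutative unital $C^*$-algebra $\mathcal C:=\overline{T(\mathcal B)}^{op}$, the continuous functional calculus on $T_b$ realises $I$ as $f(T_b)$ for the constant function $f\equiv 1$ on $\sigma(T_b)$; because $0\notin\sigma(T_b)$, the Stone--Weierstrass theorem yields polynomials $q_n$ with $q_n(0)=0$ such that $q_n\to 1$ uniformly on $\sigma(T_b)$. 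Then $q_n(b)\in\mathcal B$ and $T_{q_n(b)}=q_n(T_b)\to I$ in operator norm.

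The hard part is to upgrade this operator-norm convergence to a $p$-norm Cauchy condition in $\mathcal B$, so that completeness of $\mathcal B$ produces a limit. The spectral hypothesis $\|T_x\|_{op}^p=r(x)$ for self-adjoint $x\in\mathcal B$ only supplies the one-sided bound $\|T_x\|_{op}^p\leq\|x\|$, so operator-norm smallness of $T_x$ does not by itself force $\|x\|$ to be small. My proposed route is a Newton-style doubling scheme: set $c^{(0)}:=b$ and $c^{(k+1)}:=2c^{(k)}-(c^{(k)})^2\in\mathcal B$. A one-line induction gives $I-T_{c^{(k)}}=(I-T_b)^{2^k}$, hence doubly-exponential operator-norm convergence $T_{c^{(k)}}\to I$. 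Heuristically, if a unit $e$ already existed, the errors $d_k:=e-c^{(k)}\in\mathcal B$ would obey $d_{k+1}=(e-c^{(k)})^2=e-2c^{(k)}+(c^{(k)})^2=d_k^2$ (using $e^2=e$ and $ec^{(k)}=c^{(k)}$), so that $\|d_k\|\leq\|d_0\|^{2^k}$ and $(c^{(k)})$ would be Cauchy as soon as $\|d_0\|<1$. Bypassing this circularity is the crux: one must establish Cauchyness of $(c^{(k)})$ in $\mathcal B$ a priori, from the spectral hypothesis applied to each self-adjoint iterate $c^{(k+1)}-c^{(k)}$ (whose $T$-image $(I-T_b)^{2^k}\bigl(I-(I-T_b)^{2^k}\bigr)$ has operator norm shrinking like $\|I-T_b\|_{op}^{2^k}$) together with completeness of $\mathcal B$, likely via a careful comparison of $\mathcal B$-norm with spectral radius along the polynomial iterates of $b$.

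Once Cauchyness in $\|\cdot\|$ is secured, set $e:=\lim c^{(k)}\in\mathcal B$. Continuity of $T$ on $\mathcal B$, which follows from $\|T_y\|_{op}^{2p}=\|T_{y^*y}\|_{op}^p=r(y^*y)\leq\|y^*y\|\leq\|y\|^2$ and hence $\|T_y\|_{op}^p\leq\|y\|$ for all $y\in\mathcal B$, then yields $T_e=\lim T_{c^{(k)}}=I$, and the short faithfulness argument of the first paragraph shows that this $e$ is the unit of $\mathcal A$.
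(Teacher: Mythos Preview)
Your reduction to finding $e\in\mathcal B$ with $T_e=I$, and the final faithfulness argument, are both correct and match the paper. The gap is exactly where you flag it: the Newton iteration $c^{(k+1)}=2c^{(k)}-(c^{(k)})^2$ gives $I-T_{c^{(k)}}=(I-T_b)^{2^k}\to 0$ in operator norm, but there is no mechanism to conclude that $(c^{(k)})$ is Cauchy in $\|\cdot\|$. The hypothesis tells you $\|T_x\|_{op}^p=r_{\mathcal B}(x)$ for self-adjoint $x$, so the only control you get on the increments $d_k:=c^{(k+1)}-c^{(k)}=c^{(k)}-(c^{(k)})^2$ is that $r_{\mathcal B}(d_k)\to 0$ doubly exponentially. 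But $r_{\mathcal B}$ does not dominate $\|\cdot\|$; in a commutative $p$-Banach algebra the gap between spectral radius and norm can be arbitrary, and here the degrees of the polynomials $p_k$ with $c^{(k)}=p_k(b)$ grow like $2^k$, so the crude estimate $\|c^{(k+1)}\|\le 2^p\|c^{(k)}\|+\|c^{(k)}\|^2$ allows $\|c^{(k)}\|$ itself to blow up doubly exponentially. Your proposed ``careful comparison of $\mathcal B$-norm with spectral radius along the polynomial iterates of $b$'' is not available in general, so the argument as written does not close.

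The paper avoids any limit in the $\|\cdot\|$-topology altogether. It observes that $\mu(x):=\|T_x\|_{op}^p$ and the spectral radius $r$ are equivalent $p$-seminorms on the commutative algebra $\mathcal B$ (they agree on self-adjoints and are $\ast$-invariant), so every character of $\mathcal B$ is $\mu$-continuous and $\Delta(\mathcal B)\cong\Delta(\mathcal B^\mu)$, where $\mathcal B^\mu\cong\overline{T(\mathcal B)}^{\|\cdot\|_{op}}$ is unital by hypothesis. One then picks $x\in\mathcal B$ with $\|\widehat x-\mathbf 1\|_\infty<\tfrac12$ on this common Gel'fand space, so $|\widehat x|\ge\tfrac12$ everywhere; thus $0$ is an isolated point of $\sigma_{\mathcal B_1}(x)$ in the unitization $\mathcal B_1$, and the holomorphic functional calculus applied to a function that is $0$ near $0$ and $1$ near the rest of the spectrum produces an idempotent $e\in\mathcal B$ directly. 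Since $\widehat e\equiv 1$ on $\Delta(\mathcal B)=\Delta(\mathcal B^\mu)$, the image $T_e$ is a projection with spectrum $\{1\}$ in the commutative $C^\ast$-algebra $\overline{T(\mathcal B)}$, hence $T_e=I$. The point is that Gel'fand theory plus functional calculus manufactures $e$ inside $\mathcal B$ in one stroke, bypassing the norm-versus-spectral-radius obstruction that blocks your iterative scheme.
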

\begin{proof}
For all $x\in\mathcal{B}$, let $\mu(x)=\|T_x\|_{op}^p$, and let $r(x)$ be the spectral radius of $x$. Then $\mu$ and $r$ are equivalent $p$-norms on $\mathcal{B}$ as $r$ is subadditive on $\mathcal{B}$, $r(x)=\mu(x)$ for all $x=x^\ast\in\mathcal{B}$ and $\mu(x)=\mu(x^\ast),\ r(x)=r(x^\ast)$ for all $x\in\mathcal{B}$. The completion of $\mathcal{B}$ with $\mu$, $\mathcal{B}^\mu$, is a commutative $p$-$C^\ast$-algebra isomorphic to $\overline{T(\mathcal{B})}^\mu$, and by assumption $\mathcal{B}^\mu$ has unit. As $\mathcal{B}$ is dense in $\mathcal{B}^\mu$, $\mu(x)\leq\|x\| \ (x\in\mathcal{B})$ and every $\phi\in\Delta(\mathcal{B})$ can be extended to $\widetilde{\phi}\in\Delta(\mathcal{B}^\mu)$, the Gel'fand spaces of $\mathcal{B}^\mu$ and $\mathcal{B}$ are homeomorphic via the map $\widetilde{\phi}\mapsto\widetilde{\phi}_{|\mathcal{B}}$. Since the unit of $\mathcal{B}^\mu$ has the Gel'fand transform $\mathbf{1}$, there is $x\in\mathcal{B}$ such that $\|\widehat{x}-\mathbf{1}\|_\infty<\frac{1}{2}$. Since $|\widehat{x}|\geq\frac{1}{2}$ on $\Delta(\mathcal{B})$, there is a unit $e\in\mathcal{B}$ and $T_e=I$. For $a\in\mathcal{A}$, $T_{a-ae}=T_a-T_aI=0$ and $T_{a-ea}=T_a-IT_a=0$. Since $T$ is faithful, $a=ae=ea$ and so $e$ is unit of $\mathcal{A}$.
\end{proof}

\begin{proof}[Proof of Theorem \ref{hul}]
For $x=x^\ast\in S$, let $\mathcal{B}$ be a commutative $\|\cdot\|$-closed $\ast$-subalgebra of $\mathcal{A}$ containing $x$.

If $I\in\mathcal{B}^\mu$, then the facts that the spectrum of $x$ does not separate the complex plane, $\mathcal{A}$ and $\mathcal{B}$ have the same unit, and $\mathcal{B}^\mu$ and $B(\mathcal{H})$ have the same unit imply that \begin{equation}\label{star} \sigma_\mathcal{A}(x)=\sigma_\mathcal{B}(x)=\{\phi(x):\phi\in\Delta(\mathcal{B})=\Delta(\mathcal{B}^\mu)\}=\sigma_{\mathcal{B}^\mu}(x)=\sigma(T_x). \end{equation}

If $I\notin\mathcal{B}^\mu$ and $\mathcal{A}$ has no unit, then $0\in\sigma_\mathcal{A}(x)$. Since $\mathcal{B}^\mu+\mathbb{C}I\cong\mathcal{B}^\mu\oplus\mathbb{C}$ and $\mathcal{B}^\mu+\mathbb{C}I$ and $B(\mathcal{H})$ have the same unit, $0\in\sigma_{\mathcal{B}^\mu+\mathbb{C}I}(x)=\sigma(T_x)$. So, $\sigma_\mathcal{A}(x)=\sigma(T_x)$ as the case of non-zero spectral values follows from (\ref{star}).

If $I\notin\mathcal{B}^\mu$ and $\mathcal{A}$ has unit, say $e$, then $T_e=I$ and $e\notin\mathcal{B}$. Since $\mathcal{B}^\mu+\mathbb{C}e\cong\mathcal{B}^\mu\oplus\mathbb{C}$ and $\mathcal{B}^\mu+\mathbb{C}e$ and $\mathcal{A}$ have the same unit $e$, $0\in\sigma_{\mathcal{B}^\mu+\mathbb{C}e}(x)=\sigma_\mathcal{A}(x)$. Also $0\in\sigma(T_x)$ as seen above. Combining it with (\ref{star}), we have $\sigma_\mathcal{A}(x)=\sigma(T_x)$.
\end{proof}

\section{Barnes' lemma for $p$-Banach algebras}

Let $(G,d)$ be a metric space with the counting measure $\mu$. For a subset $A$ of $G$, $\chi(A)$ denote the characteristic function of $A$. For $\delta>0$, let $\Gamma[\delta]=\{(x,y) \in G \times G : d(x,y)\leq\delta \}$, and for $x\in G$, let $\Gamma_x[\delta]=\{y\in G: d(x,y)\leq\delta \}$.

\textbf{Assumption:} There are constants $C>0, b>0$ such that $\mu(\Gamma_x[\delta])\leq C\delta^b$ for all $x\in G$ and $\delta>0$.

A \emph{kernel} $K=K(x,y)$ is a complex valued function on $G\times G$. Let $0<p\leq 1$. Let $\mathcal{A}_p$ be the collection of all kernels $K(x,y)$ such that $$\|K\|_p=\max\Big\{\sup_x\sum_y|K(x,y)|^p, \sup_y\sum_x|K(x,y)|^p \Big\}<\infty.$$ Then $(\mathcal{A}_p,\|\cdot\|_p)$ is $p$-Banach $\ast$-algebra with the convolution $$(K\star J)(x,y)=\sum_zK(x,z)J(z,y) \quad (K,J\in\mathcal{A}_p)$$ and the involution $K^\ast(x,y)=\overline{K(y,x)} \quad (K\in\mathcal{A}_p)$. Indeed, if $K,J\in\mathcal{A}_p$, then
\begin{align*} \sum_x |(K\star J)(x,y)|^p = \sum_x \left|\sum_z K(x,z)J(z,y)\right|^p &\leq \sum_x \sum_z |K(x,z)|^p|J(z,y)|^p \\&\leq \|K\|_p\|J\|_p<\infty,
\end{align*}
and the same inequality follows by reversing the roles of $x$ and $y$, so we obtain $\|K\star J\|_p\leq\|K\|_p\|J\|_p$.

Let $\delta\in(0,1]$ be fixed and define a weight $\omega:G\times G \to [1,\infty)$ by $$\omega(x,y)=(1+d(x,y))^\delta \quad ((x,y)\in G\times G).$$ By $\mathcal{A}_{p\omega}$ denote the $p$-Banach $\ast$-algebra consisting all kernels $K$ with the norm $$\|K\|_{p\omega}=\max\Big\{\sup_x\sum_y|K(x,y)|^p\omega(x,y)^p, \sup_y\sum_x|K(x,y)|^p\omega(x,y)^p \Big\}<\infty$$ and involution and convolution same as that of $\mathcal{A}_p$. Let $x,y,z\in G$. Then $d(x,y)\leq d(x,z)+d(z,y)$ implies that $\omega(x,y)\leq\omega(x,z)\omega(z,y)$ and this gives $\|K\star J\|_{p\omega}\leq \|K\|_{p\omega}\|J\|_{p\omega}$.

If $p>1$, then $\mathcal{A}_p$ is a Banach space \cite[Theorem 11.5]{Jo} with the norm $$\|K\|_p=\max\Bigg\{\sup_x\left(\sum_y|K(x,y)|^p\right)^\frac{1}{p}, \sup_y\left(\sum_x|K(x,y)|^p\right)^\frac{1}{p} \Bigg\}.$$

\begin{lemma}\label{inc}
Let $0<p\leq1$. If $K\in\mathcal{A}_{p\omega}$, then $K\in\mathcal{A}_q$ for $q\geq p$.
\end{lemma}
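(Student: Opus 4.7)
The plan is to exploit two facts: that $\omega \geq 1$ pointwise, and that the $\ell^{\infty}$-type suprema in the definition of $\|\cdot\|_{p\omega}$ force a pointwise bound on $K$ itself. No growth assumption on $\mu$ is needed.

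First I would observe that for any $(x,y)\in G\times G$, the single term $|K(x,y)|^p\omega(x,y)^p$ is dominated by the full sum $\sup_x \sum_y |K(x,y)|^p\omega(x,y)^p \leq \|K\|_{p\omega}$. Combined with $\omega(x,y)\geq 1$, this yields the pointwise estimate $|K(x,y)|\leq \|K\|_{p\omega}^{1/p}$. Next, for $q\geq p$, I would split $|K(x,y)|^q = |K(x,y)|^{q-p}\,|K(x,y)|^p$ and use both the pointwise bound and $\omega(x,y)^p\geq 1$ to write
\begin{equation*}
|K(x,y)|^q \;\leq\; \|K\|_{p\omega}^{(q-p)/p}\,|K(x,y)|^p \;\leq\; \|K\|_{p\omega}^{(q-p)/p}\,|K(x,y)|^p\omega(x,y)^p.
\end{equation*}
Summing this inequality over $y$ (with $x$ fixed), and then summing over $x$ (with $y$ fixed), gives
\begin{equation*}
\max\Bigl\{\sup_x\sum_y|K(x,y)|^q,\;\sup_y\sum_x|K(x,y)|^q\Bigr\}\;\leq\;\|K\|_{p\omega}^{q/p}.
\end{equation*}

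This covers the case $p\leq q\leq 1$ directly, since $\mathcal{A}_q$ is then defined via the unweighted $\ell^\infty$-type sums. For $q>1$, where $\mathcal{A}_q$ carries the Banach-space norm with an outer exponent $1/q$, the same finiteness conclusion for $\sum_y |K(x,y)|^q$ and $\sum_x|K(x,y)|^q$ immediately yields $\|K\|_q\leq \|K\|_{p\omega}^{1/p}<\infty$, so $K\in\mathcal{A}_q$ in that sense as well.

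There is essentially no obstacle: the only subtle point is recognizing that the $\sup$-over-sums norm gives termwise bounds for free, which together with $\omega\geq 1$ lets one convert extra factors of $|K|$ into harmless constants while simultaneously inserting the $\omega^p$ weight needed to compare with $\|K\|_{p\omega}$.
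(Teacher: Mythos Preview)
Your argument is correct. The pointwise bound $|K(x,y)|^p\leq\|K\|_{p\omega}$ obtained from $\omega\geq 1$, followed by the splitting $|K|^q=|K|^{q-p}|K|^p$ and reinsertion of the weight, is the natural route and yields the clean estimate $\|K\|_q\leq\|K\|_{p\omega}^{1/p}$ in both regimes $p\leq q\leq 1$ and $q>1$.

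As for comparison: the paper states Lemma~\ref{inc} without proof, so there is nothing to compare against. Your write-up would serve perfectly well as the omitted justification; it is the standard $\ell^p\subset\ell^q$ inclusion argument adapted to the mixed $\sup$--$\sum$ norm, and indeed requires neither the growth assumption on $\mu$ nor any special property of $\omega$ beyond $\omega\geq 1$.
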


Let $0<p\leq1$, $q\geq p$, and let $K\in\mathcal{A}_p$. Then $K$ defines a bounded linear operator $K_q$ on $\ell^q(G)$ in the following manner $$K_q(f)(x)=\sum_yK(x,y)f(y) \quad (f\in\ell^q(G)).$$ The spectrum of $K$ in $\mathcal{A}_{p\omega}$ and $\mathcal{A}_p$ are denoted by $\sigma_{p\omega}(K)$ and $\sigma_p(K)$ respectively and the corresponding spectral radii are $r_{p\omega}(K)$ and $r_p(K)$. The spectrum and spectral radius of the operator $K_q$ in $B(\ell^q(G))$ are denoted by $\sigma(K_q)$ and $r(K_q)$ respectively.

\begin{theorem}\label{rwww}
Let $0<p\leq1$, and let $K\in\mathcal{A}_{p\omega}$. Then $r_{p\omega}(K)=r_p(K)$.
\end{theorem}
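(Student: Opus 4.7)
The plan is to handle the easy direction $r_p(K) \le r_{p\omega}(K)$ by noting $\omega \ge 1$, which gives $\|K^n\|_p \le \|K^n\|_{p\omega}$ for every $n$, and then to prove the reverse inequality via the single estimate
\[
\|K^n\|_{p\omega} \le \|K\|_p^n + n\,\|K\|_{p\omega}\,\|K\|_p^{n-1} \qquad (n \in \mathbb{N}),
\]
bootstrapped using the spectral mapping relation $r_{p\omega}(K^m) = r_{p\omega}(K)^m$.

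To establish the estimate, I would first expand the convolution and apply $|\sum_i a_i|^p \le \sum_i |a_i|^p$ (valid for $0<p\le 1$) to write
\[
|K^n(x,y)|^p \le \sum_{z_1,\dots,z_{n-1}} \prod_{j=1}^n |K(z_{j-1},z_j)|^p,
\]
with $z_0 := x$ and $z_n := y$. For the weight, setting $q = \delta p \in (0,1]$, the triangle inequality $d(x,y)\le \sum_j d(z_{j-1},z_j)$ combined with the elementary bounds $(1+t)^q \le 1+t^q$ and $(a+b)^q \le a^q + b^q$ yields the key \emph{additive} splitting
\[
\omega(x,y)^p = (1+d(x,y))^q \le 1 + \sum_{j=1}^n \omega(z_{j-1},z_j)^p.
\]
Multiplying the two displays and summing over $y$, the contribution of the ``$1$'' is $\sum_{z_1,\dots,z_{n-1},y}\prod_j |K(z_{j-1},z_j)|^p$, which, by sweeping the sums from the right using $\sup_z \sum_w |K(z,w)|^p \le \|K\|_p$ at each step, is bounded by $\|K\|_p^n$. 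For the $i$-th weighted contribution the same sweep replaces exactly one factor by $\sup_z \sum_w |K(z,w)|^p\omega(z,w)^p \le \|K\|_{p\omega}$ and keeps the remaining $n-1$ factors bounded by $\|K\|_p$; summing over $i=1,\dots,n$ yields the $n\,\|K\|_{p\omega}\|K\|_p^{n-1}$ term. The bound on $\sup_y \sum_x$ follows from the symmetric sweep from the left.

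With the displayed inequality in hand, the standard computation $(\|K\|_p^n + n\|K\|_{p\omega}\|K\|_p^{n-1})^{1/n} \to \|K\|_p$ gives $r_{p\omega}(K) \le \|K\|_p$. Applying this estimate to $K^m$ in place of $K$ and invoking $r_{p\omega}(K^m) = r_{p\omega}(K)^m$ (the spectral mapping identity for polynomials, valid in any unital $p$-Banach algebra, or in the unitization otherwise), we obtain $r_{p\omega}(K)^m \le \|K^m\|_p$, and letting $m \to \infty$ gives $r_{p\omega}(K) \le r_p(K)$, which combined with the easy inequality finishes the proof.

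The delicate point is the choice of weight splitting: the natural multiplicative bound $\omega(x,y)^p \le \prod_j \omega(z_{j-1},z_j)^p$ would only recover the trivial $\|K^n\|_{p\omega}\le \|K\|_{p\omega}^n$, which does not separate the two algebra norms at all. The additive splitting, available precisely because $\delta p \le 1$, isolates a single weighted factor at the price of only a linear-in-$n$ overhead, and that overhead disappears in the $n$-th root, letting the spectral radius ``see'' only the unweighted norm in the limit.
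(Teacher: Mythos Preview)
Your argument is correct, but it proceeds along a genuinely different route from the paper. The paper introduces a one-parameter family of equivalent weights $\omega_\varepsilon(x,y)=(1+\varepsilon d(x,y))^\delta$, uses the two-sided comparison $\omega_\varepsilon\le\omega\le\varepsilon^{-\delta}\omega_\varepsilon$ to obtain $r_{p\omega}(K)\le r_{p\omega_\varepsilon}(K)\le\|K\|_{p\omega_\varepsilon}$, and then lets $\varepsilon\to0$ (via $(1+\varepsilon d)^{p\delta}\le 1+\varepsilon^{p\delta}d^{p\delta}$) to conclude $r_{p\omega}(K)\le\|K\|_p$; the final bootstrap $r_{p\omega}(K)^m=r_{p\omega}(K^m)\le\|K^m\|_p$ is identical to yours. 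By contrast, you bypass the auxiliary weights entirely: expanding $K^n$ and exploiting the \emph{additive} splitting $\omega(x,y)^p\le 1+\sum_j\omega(z_{j-1},z_j)^p$ you get the explicit bound $\|K^n\|_{p\omega}\le\|K\|_p^{\,n}+n\,\|K\|_{p\omega}\|K\|_p^{\,n-1}$ directly. Both arguments pivot on $p\delta\le1$ (the paper to split $(1+\varepsilon d)^{p\delta}$, you to split $(\sum_j d_j)^{p\delta}$), and both reach the same intermediate inequality $r_{p\omega}(K)\le\|K\|_p$. Your combinatorial estimate is more self-contained and quantitatively sharper; the paper's auxiliary-weight scheme, on the other hand, is exactly the template reused later (Lemma~\ref{grl9}) for general admissible weights, where the polynomial form $(1+d)^\delta$ and hence your additive splitting are no longer available.
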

\begin{proof}
Let $0<\varepsilon\leq1$. Define a weight $\omega_\varepsilon:G\times G\to[1,\infty)$ by $$\omega_\varepsilon(x,y)=(1+\varepsilon d(x,y))^\delta.$$ Since $d(x,y)\leq d(x,z)+d(z,y)$, $1\leq\omega_\varepsilon(x,y) \leq \omega_\varepsilon(x,z)\omega_\varepsilon(z,y)$. So, $\mathcal{A}_{p\omega_\varepsilon}$ is a $p$-Banach algebra with the norm $\|K\|_{p\omega_\varepsilon}=\|K\omega_\varepsilon\|_p$. As $\omega_\varepsilon \leq \omega \leq \varepsilon^{-\delta}\omega_\varepsilon$ on $G\times G$, $\|K\|_{p\omega}\leq \varepsilon^{-p\delta} \|K\|_{p\omega_\varepsilon}$ and for $n\in\mathbb{N}$, $\|K^n\|^\frac{1}{n}_{p\omega}\leq \varepsilon^{-\frac{p\delta}{n}} \|K^n\|_{p\omega_\varepsilon}^\frac{1}{n}$. This implies that \begin{equation}\label{r1} r_{p\omega}(K)\leq r_{p\omega_\varepsilon}(K)\leq \|K\|_{p\omega_\varepsilon}. \end{equation} Since $1\leq\omega_\epsilon(x,y)$ for all $x,y\in G$, \begin{align*} \sup_x \sum_y |K(x,y)|^p &\leq \sup_x \sum_y|K(x,y)|^p \omega_\varepsilon(x,y)^p \\&\leq \sup_x \sum_y|K(x,y)|^p (1+\varepsilon^{p\delta} d(x,y)^{p\delta}) \quad (\text{as} \ 0<p\delta\leq1) \\&\leq \sup_x \sum_y|K(x,y)|^p + \varepsilon^{p\delta} \sup_x \sum_y|K(x,y)|^p d(x,y)^{p\delta}. \end{align*} Since the same inequality holds with $x$ and $y$ interchanged, $$\lim_{\varepsilon\to0}\|K\|_{p\omega_\varepsilon}=\|K\|_p.$$ This along with (\ref{r1}) gives $r_{p\omega}(K)\leq\|K\|_p$. But then $$r_{p\omega}(K)^n=r_{p\omega}(K^n)\leq\|K^n\|_p$$ and so $r_{p\omega}(K)\leq r_p(K)$. Since $\mathcal{A}_{p\omega}\subset\mathcal{A}_p$, $r_p(K)\leq r_{p\omega}(K)$. Hence, $r_{p\omega}(K)=r_p(K)$.
\end{proof}

Following lemma is a generalization of \cite[Lemma 4.4.6]{Ri} for $p$-Banach algebras.

\begin{lemma}\label{ri}
Let $0<p\leq1$, $a\mapsto T_a$ be a continuous $\ast$-representation of a $p$-normed $\ast$-algebra $\mathcal{A}$ on a Hilbert space $\mathcal{H}$, and let $a=a^\ast\in\mathcal{A}$. Then $\|T_a\|_{op}^p\leq r(a)$, where $r(a)$ is spectral radius of $a$ in $\mathcal{A}$.
\end{lemma}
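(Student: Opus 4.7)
The plan is to emulate the classical proof of this fact for Banach $\ast$-algebras, modifying only the bookkeeping of powers to account for the fact that $\|\cdot\|$ is a $p$-norm rather than a norm. The two ingredients are (i) the $C^\ast$-identity in $B(\mathcal H)$, which controls $\|T_a\|_{op}$ by norms of high powers of $T_a$ when $T_a$ is self-adjoint, and (ii) the spectral radius formula $r(a)=\lim_{n\to\infty}\|a^n\|^{1/n}$, together with the continuity of the representation $T$.

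First, I would extract a bound from the continuity hypothesis. Since $T:\mathcal A\to B(\mathcal H)$ is continuous and linear, while $\|\alpha a\|=|\alpha|^p\|a\|$ on $\mathcal A$ but $\|T_{\alpha a}\|_{op}=|\alpha|\,\|T_a\|_{op}$ on $B(\mathcal H)$, the only scale-compatible linear bound is
\[
\|T_a\|_{op}^{p}\le C\|a\|\qquad(a\in\mathcal A)
\]
for some constant $C>0$. Next, since $a=a^\ast$ and $T$ is a $\ast$-representation, $T_a$ is self-adjoint, so the $C^\ast$-identity in $B(\mathcal H)$ gives $\|T_a\|_{op}^{2}=\|T_a^{\,2}\|_{op}=\|T_{a^{2}}\|_{op}$; iterating and using $T_{a^{2^n}}=T_a^{\,2^n}$ yields
\[
\|T_a\|_{op}^{\,2^n}=\|T_{a^{2^n}}\|_{op}\qquad(n\in\mathbb N).
\]

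Combining these two estimates, I raise the last equality to the $p$-th power and apply the continuity bound to $a^{2^n}$:
\[
\|T_a\|_{op}^{\,p\cdot 2^n}=\|T_{a^{2^n}}\|_{op}^{p}\le C\,\|a^{2^n}\|,
\]
which, on taking $2^n$-th roots, gives
\[
\|T_a\|_{op}^{p}\le C^{1/2^n}\,\|a^{2^n}\|^{1/2^n}.
\]
Letting $n\to\infty$, the factor $C^{1/2^n}\to 1$, while the spectral radius formula (valid for the $p$-Banach completion of $\mathcal A$, and in particular giving the value of the subsequential limit) yields $\|a^{2^n}\|^{1/2^n}\to r(a)$. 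Hence $\|T_a\|_{op}^{p}\le r(a)$, as required.

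I do not foresee a genuine obstacle here; the only point that needs care is the correct scaling of continuity for a linear map from a $p$-normed space to a normed space, which forces the exponent $p$ to appear on the left-hand side exactly where the statement places it. Everything else is the standard self-adjoint argument, with the indices adjusted along the sequence $2^n$ so that the $C^\ast$-identity can be iterated cleanly.
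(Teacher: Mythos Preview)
Your proof is correct and follows essentially the same route as the paper's: deduce the scaled continuity bound $\|T_x\|_{op}^{p}\le C\|x\|$, use self-adjointness of $T_a$ together with the $C^\ast$-identity to relate $\|T_a\|_{op}$ to $\|T_{a^{n}}\|_{op}$, and pass to the limit via the spectral radius formula. The only cosmetic difference is that the paper runs the argument along all $n\in\mathbb N$ (using $\|T_a^{\,n}\|_{op}=\|T_a\|_{op}^{n}$ for self-adjoint $T_a$), while you iterate the $C^\ast$-identity along the subsequence $2^{n}$; both lead to the same conclusion.
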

\begin{proof}
Since the representation is continuous, there is some constant $C\geq1$ such that $\|T_x\|_{op}\leq C$ for all $x\in\mathcal{A}$ with $\|x\|\leq1$. Let $x\in\mathcal{A}$. If $x\neq0$, then $$\bigg\|\frac{x}{\|x\|^\frac{1}{p}}\bigg\|=1 \ \text{and so} \ \bigg\|T_{\frac{x}{\|x\|^\frac{1}{p}}}\bigg\|_{op}\leq C.$$ This gives $\|T_x\|_{op}^p\leq C^p \|x\| \leq C\|x\|$. If $x=0$, then it is trivial. Thus $\|T_x\|_{op}^p\leq C\|x\|$ for all $x\in\mathcal{A}$. Let $n\in\mathbb{N}$. Then $\|T_a^\ast T_a\|_{op}=\|T_{a^\ast}T_a\|_{op}=\|T_a\|_{op}^2$, and so $\|T_a\|_{op}^{np}=\|T_{a^n}\|_{op}^p\leq C \|a^n\|$. Thus $\|T_a\|_{op}^p\leq C^\frac{1}{n} \|a^n\|^\frac{1}{n}$. Letting  $n\to\infty$, we get $\|T_a\|_{op}^p\leq r(a)$.
\end{proof}

The generalization of Barnes' lemma \cite[Theorem 4.7]{Ba} for $p$-Banach algebras is the next theorem.

\begin{theorem}\label{ba}
Let $0<p\leq1$. If $K=K^\ast\in\mathcal{A}_{p\omega}$, then $\sigma_{p\omega}(K)=\sigma(K_2)$.
\end{theorem}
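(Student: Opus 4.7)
The plan is to apply Hulanicki's lemma (Theorem~\ref{hul}) to $\mathcal{A}=S=\mathcal{A}_{p\omega}$ together with the representation $T\colon\mathcal{A}_{p\omega}\to B(\ell^{2}(G))$ given by $T(K)=K_{2}$. Once the hypotheses of Theorem~\ref{hul} are verified, the equality $\sigma_{p\omega}(K)=\sigma(K_{2})$ follows at once for every self-adjoint $K\in\mathcal{A}_{p\omega}$, which is the statement.

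The first step is to check that $T$ is a faithful continuous $\ast$-representation. The identity $(K^{\ast})_{2}=(K_{2})^{\ast}$ is a direct computation from $K^{\ast}(x,y)=\overline{K(y,x)}$. Faithfulness follows from $K_{2}\delta_{y}(x)=K(x,y)$: if $K_{2}=0$ then $K\equiv 0$. For continuity, the $\ell^{p}\subset\ell^{1}$ inclusion valid for $0<p\leq 1$ yields $\sum_{y}|K(x,y)|\leq(\sum_{y}|K(x,y)|^{p})^{1/p}$, and combined with Schur's test and $\omega\geq 1$ this gives
\[
\|K_{2}\|_{op}^{2}\leq\Bigl(\sup_{x}\sum_{y}|K(x,y)|\Bigr)\Bigl(\sup_{y}\sum_{x}|K(x,y)|\Bigr)\leq\|K\|_{p\omega}^{2/p},
\]
so $\|K_{2}\|_{op}^{p}\leq\|K\|_{p\omega}$; in particular $T$ is continuous.

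The heart of the argument is the spectral-radius identity $\|K_{2}\|_{op}^{p}=r_{p\omega}(K)$ for every $K=K^{\ast}\in\mathcal{A}_{p\omega}$, which is the nontrivial hypothesis of Theorem~\ref{hul}. The bound $\|K_{2}\|_{op}^{p}\leq r_{p\omega}(K)$ follows at once from Lemma~\ref{ri} applied to the continuous $\ast$-representation $T$. For the reverse inequality $r_{p\omega}(K)\leq\|K_{2}\|_{op}^{p}$, I would first apply Theorem~\ref{rwww} to rewrite $r_{p\omega}(K)=r_{p}(K)$, reducing matters to proving $r_{p}(K)\leq\|K_{2}\|_{op}^{p}$ in the unweighted $p$-Banach algebra $\mathcal{A}_{p}$. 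To obtain this, I would adapt Barnes' strategy from the $p=1$ case: exploit the $C^{\ast}$-identity $\|K_{2}^{n}\|_{op}=\|K_{2}\|_{op}^{n}$, which holds because $K_{2}$ is self-adjoint on $\ell^{2}(G)$, together with the pointwise bound $|K(x,y)|\leq\|K\|_{p\omega}^{1/p}\,\omega(x,y)^{-1}$ and the ball-growth estimate $\mu(\Gamma_{x}[\delta])\leq C\delta^{b}$, to control the convolution powers $K^{2n}$ via iterated Cauchy--Schwarz/Schur-type inequalities, showing that $\|K^{2n}\|_{p}^{1/(2n)}$ is bounded above by $\|K_{2}\|_{op}^{p}$ as $n\to\infty$.

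The main obstacle is exactly this lower bound on the spectral radius: the standard H\"older and duality tools available at $p=1$ degenerate for $p<1$, and one must work with the $p$-norm in tandem with the weight $\omega$ directly. It is here that the weight and the ball-growth hypothesis on $\mu$ enter in an essential way, through the off-diagonal decay of $K$ and hence of its convolution iterates. Once the identity $\|K_{2}\|_{op}^{p}=r_{p\omega}(K)$ is established, Theorem~\ref{hul} delivers $\sigma_{p\omega}(K)=\sigma(K_{2})$, with the unital versus non-unital bookkeeping handled exactly as in the proof of Theorem~\ref{hul}, through the unitization when needed.
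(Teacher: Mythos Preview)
Your overall architecture is correct and matches the paper exactly: apply Theorem~\ref{hul} to the representation $K\mapsto K_2$, obtain $\|K_2\|_{op}^p\leq r_{p\omega}(K)$ from Lemma~\ref{ri}, invoke Theorem~\ref{rwww} to replace $r_{p\omega}(K)$ by $r_p(K)$, and then establish the reverse inequality $r_p(K)\leq\|K_2\|_{op}^p$. Your check of faithfulness and continuity of $T$ is fine.

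The gap is in the last step. Your description ``adapt Barnes' strategy \ldots\ control the convolution powers $K^{2n}$ via iterated Cauchy--Schwarz/Schur-type inequalities'' is not yet a proof, and the mechanism you hint at (directly comparing $\|K^{2n}\|_p$ with $\|K_2\|_{op}$ via the pointwise weight decay and the $C^\ast$-identity) is not how the paper proceeds. The paper does \emph{not} re-run Barnes' $p=1$ argument in the $p<1$ setting; instead it reduces to the $p=1$ case and then quotes \cite[Theorem~4.7]{Ba} as a black box. Concretely, the paper splits
\[
\|K^{n+1}\|_p \leq \|\chi(\Gamma[2^n])K^{n+1}\|_p + \|\chi(\Gamma[2^n]^c)K^{n+1}\|_p,
\]
bounds the far-off-diagonal piece by $2^{-np\delta}\|K^{n+1}\|_{p\omega}$ using $\omega\geq 2^{n\delta}$ on $\Gamma[2^n]^c$, and bounds the near-diagonal piece by iterated H\"older (choosing $m$ with $2^{-m}<p\leq 2^{-(m-1)}$ and applying Cauchy--Schwarz $m$ times) to pass from the $\ell^p$-sum to the $\ell^{2^mp}$-sum, picking up a factor $(C2^{nb})^{1-2^{-m}}$ from the ball-growth hypothesis and landing at $\|K^{n+1}\|_1^p$ since $2^mp>1$. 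Taking $(n+1)$-th roots and letting $n\to\infty$ gives $r_p(K)\leq r_1(K)^p\cdot\text{const}$; the constant is then removed by the standard iteration $r_p(K)=r_p(K^n)^{1/n}$. Finally $r_1(K)\leq\|K_2\|_{op}$ is exactly Barnes' original lemma.

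So the missing idea in your proposal is this two-stage reduction: first from $r_p$ to $r_1$ via the $\Gamma[2^n]$-splitting and iterated H\"older (this is where the ball-growth and the weight genuinely enter, and where $p<1$ requires new work), and only then from $r_1$ to $\|K_2\|_{op}$ by citing the classical $p=1$ result. Your sketch conflates these two stages and does not supply the splitting, which is the crux.
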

\begin{proof}
By Lemma \ref{inc}, $K\in\mathcal{A}_p$. Let $n\in\mathbb{N}$. Then \begin{equation}\label{ba1} \|K^{n+1}\|_p \leq \|\chi(\Gamma[2^n])K^{n+1}\|_p + \|\chi(\Gamma[2^n]^c)K^{n+1}\|_p, \end{equation}
where $\Gamma[2^n]^c$ is complement of the set $\Gamma[2^n]$ in $G\times G$. Since $2^{n\delta}\leq\omega(x,y)$ for all $(x,y)\in\Gamma[2^n]^c$, \begin{equation}\label{ba2} \|\chi(\Gamma[2^n]^c)K^{n+1}\|_p \leq \|K^{n+1}\|_{p\omega} 2^{-np\delta}. \end{equation} Choose $m\in\mathbb{N}$ such that $\frac{1}{2^m}<p\leq\frac{1}{2^{m-1}}$. Then $1<2^mp$ and so $\|K^{n+1}\|_{2^mp}\leq\|K^{n+1}\|_1$. Using it along with Holder's inequality and Assumption, we get
\begin{align*} & \ \quad \sum_x |K^{n+1}(x,y)|^p \big(\chi(\Gamma[2^n])(x,y) \big)^p \\ &\leq \left(\sum_x |K^{n+1}(x,y)|^{2p} \big(\chi(\Gamma[2^n])(x,y)\big)^p\right)^\frac{1}{2} \left(\sum_x \big(\chi(\Gamma[2^n])(x,y)\big)^p\right)^\frac{1}{2} \\ & \leq \left(\sum_x |K^{n+1}(x,y)|^{2^mp}\right)^\frac{1}{2^m} \left(\sum_x \big(\chi(\Gamma[2^n])(x,y)\big)^{2p}\right)^\frac{1}{2^m} \left(\sum_x \big(\chi(\Gamma[2^n])(x,y)\big)^p\right)^\frac{1}{2^{m-1}} \\ & \qquad \cdots \left(\sum_x \big(\chi(\Gamma[2^n])(x,y)\big)^p\right)^\frac{1}{4} \left(\sum_x \big(\chi(\Gamma[2^n])(x,y)\big)^p\right)^\frac{1}{2} \\ & \leq \|K^{n+1}\|_{2^mp}^p (C2^{nb})^\frac{1}{2^m} (C2^{nb})^\frac{1}{2^{m-1}} \cdots (C2^{nb})^\frac{1}{4} (C2^{nb})^\frac{1}{2} \\ & \leq \|K^{n+1}\|_1^p (C2^{nb})^{\sum_{i=1}^m\frac{1}{2^i}} .\end{align*}
Since similar inequality holds by changing the roles of $x$ and $y$, \begin{equation}\label{ba3} \|\chi(\Gamma[2^n])K^{n+1}\|_p \leq \|K^{n+1}\|_1^p (C2^{nb})^{\sum_{i=1}^m\frac{1}{2^i}}. \end{equation} So, by (\ref{ba1}), (\ref{ba2}) and (\ref{ba3}), \begin{align*} \|K^{n+1}\|_p^\frac{1}{n+1} \leq \|K^{n+1}\|_1^\frac{p}{n+1} (C^\frac{1}{n+1} 2^{\frac{n}{n+1}b})^{\sum_{i=1}^m\frac{1}{2^i}} + \|K^{n+1}\|_{p\omega}^\frac{1}{n+1} 2^{-\frac{n}{n+1}p\delta}. \end{align*} This gives $r_p(K)\leq r_1(K)^p (2^b)^{\sum_{i=1}^m\frac{1}{2^i}} + r_{p\omega}(K) 2^{-p\delta}$. By Theorem \ref{rwww}, $r_p(K)=r_{p\omega}(K)$ and so $$r_p(K) \leq r_1(K)^p \frac{(2^b)^{\sum_{i=1}^m\frac{1}{2^i}}}{1-2^{-p\delta}}.$$ Now, $$r_p(K)=r_p(K^n)^\frac{1}{n} \leq r_1(K^n)^\frac{p}{n} \left(\frac{(2^b)^{\sum_{i=1}^m\frac{1}{2^i}}}{1-2^{-p\delta}}\right)^\frac{1}{n} = r_1(K)^p \left(\frac{(2^b)^{\sum_{i=1}^m\frac{1}{2^i}}}{1-2^{-p\delta}}\right)^\frac{1}{n}. $$ Letting $n\to\infty$, we get $r_p(K) \leq r_1(K)^p$. By \cite[Theorem 4.7]{Ba}, $r_1(K)\leq \|K_2\|_{op}$ and thus $r_p(K) \leq \|K_2\|_{op}^p$. Combining it with Lemma \ref{ri} and Theorem \ref{rwww}, we get $$r_{p\omega}(K)=r_p(K)=\|K_2\|_{op}^p.$$
The result follows from Theorem \ref{hul}.
\end{proof}

\section{Inverse-closedness of some $p$-Banach algebras}
\subsection{Inverse-closedness of $p$-Beurling algebras of infinite matrices}
A weight $\omega$ on $\mathbb{R}^d$ is a non-negative measurable function satisfying $$ \omega(x+y)\leq\omega(x)\omega(y) \quad (x,y\in\mathbb{R}^d).$$ Following \cite{Gr}, we impose the following conditions on weight $\omega$ to study decay conditions of infinite matrices:
\begin{enumerate}
	\item Let $|\cdot|$ be a norm on $\mathbb{R}^d$, and let $\rho:[0,\infty)\to[0,\infty)$ be a continuous concave function such that $\rho(0)=0$. We take $\omega$ to be of the form $$\omega(x)=e^{\rho(|x|)} \quad (x\in\mathbb{R}^d).$$ Then $\omega(0)=1$ and $\omega$ is even, i.e., $\omega(x)=\omega(-x)$.
	\item $\omega$ satisfies the GRS-condition (Gel'fand-Raikov-Shilov condition \cite{Ge}) $$\lim_{n\to\infty} \omega(nx)^\frac{1}{n}=1 \quad \text{for all} \ x\in\mathbb{R}^d.$$
\end{enumerate}
The condition (ii) implies that $\lim_{\alpha\to\infty}\frac{\rho(\alpha)}{\alpha}=0$ and such a weight is called an \emph{admissible weight}. Here we will consider only admissible weights and that too mostly on $\mathbb{Z}^d$ which is obtained by restricting $\omega$ on $\mathbb{Z}^d$.

Let $0<p\leq1$. Let $\mathcal{A}_{p\omega}$ be the collection of all matrices $A=(a_{kl})_{k,l\in\mathbb{Z}^d}$ satisfying $$\|A\|_{p\omega}=\max \Big\{ \sup_{k\in\mathbb{Z}^d} \sum_{l\in\mathbb{Z}^d} |a_{kl}|^p \omega(k-l)^p, \sup_{l\in\mathbb{Z}^d} \sum_{k\in\mathbb{Z}^d} |a_{kl}|^p\omega(k-l)^p \Big\} <\infty.$$ Then $\mathcal{A}_{p\omega}$ is a $p$-Banach $\ast$-algebra with norm $\|\cdot\|_{p\omega}$, involution $\ast:A=(a_{kl})\mapsto A^\ast=(a^\ast_{kl})$ where $a^\ast_{kl}=\overline{a_{lk}}$ and convolution as multiplication defined by $(A\star B)_{kl}=\sum_{j\in\mathbb{Z}^d} a_{kj}b_{jl}$ for $A=(a_{kl}),B=(b_{kl})\in\mathcal{A}_{p\omega}$.

Note that we will skip writing $\mathbb{Z}^d$ in the indices as the case will be clear and $(A)_{kl}$ denote the $(k,l)^{th}$ entry of the matrix $A$. When the trivial weight $\omega\equiv1$ is in consideration, the corresponding space will be denoted by $\mathcal{A}_p$.

If $A\in\mathcal{A}_{p\omega}$, then $A\in\mathcal{A}_q$ for all $q\geq p$ and so the standard Schur test implies that $A\in B(\ell^q(\mathbb{Z}^d))$ for all $q\geq p$. So, $\mathcal{A}_{p\omega}$ can be seen as a $\ast$-subalgebra of bounded operators acting on $\ell^2(\mathbb{Z}^d)$. The spectrum of $A$ in $\mathcal{A}_{p\omega}$, $\mathcal{A}_q \ (q\geq p)$ and as an operator in $B(\ell^2(\mathbb{Z}^d))$ will be denoted by $\sigma_{p\omega}(A)$, $\sigma_q(A) $ and $\sigma(A) $ respectively and the corresponding spectral radii are denoted by $r_{p\omega}(A)$, $r_q(A)$ and $r(A)$.

A weight $\omega$ is said to be satisfying \emph{weak growth condition} if for some positive constant $C$ and $0<\delta\leq1$, $$\omega(x)\geq C(1+|x|)^\delta, \quad \text{for all}\ x.$$
Following is our main theorem in this section.

\begin{theorem}\label{grm}
Let $\omega$ be an admissible weight satisfying the weak growth condition, and let $A=A^\ast\in\mathcal{A}_{p\omega}$. Then $$r_{p\omega}(A)=\|A\|_{op}^p.$$ Consequently, $\sigma_{p\omega}(A)=\sigma(A)$ and $\mathcal{A}_{p\omega}$ is symmetric.
\end{theorem}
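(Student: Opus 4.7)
I aim to prove $r_{p\omega}(A)=\|A\|_{op}^p$ for $A=A^*\in\mathcal{A}_{p\omega}$; the equality $\sigma_{p\omega}(A)=\sigma(A)$ and the symmetry of $\mathcal{A}_{p\omega}$ then follow at once from Hulanicki's lemma (Theorem \ref{hul}) applied to the inclusion $\iota\colon\mathcal{A}_{p\omega}\hookrightarrow B(\ell^2(\mathbb{Z}^d))$. The easy direction $\|A\|_{op}^p\leq r_{p\omega}(A)$ is immediate: the inequality $\sum_l a_l\leq\left(\sum_l a_l^p\right)^{1/p}$ for $0<p\leq 1$ combined with Schur's test gives $\|A\|_{op}\leq\|A\|_{p\omega}^{1/p}$, so $\iota$ is a continuous $*$-representation and Lemma \ref{ri} produces the bound.

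For the reverse inequality I would reduce to Barnes' lemma by inserting a polynomial weight. Let $v(x)=(1+|x|)^\delta$ be the polynomial weight provided by the weak-growth hypothesis and view $\mathbb{Z}^d$ as a metric space with $d(k,l)=|k-l|$ and counting measure; the ball-growth bound $\mu(\Gamma_k[R])\leq CR^d$ is satisfied, so Theorem \ref{ba} applies and yields $r_{pv}(A)=\|A\|_{op}^p$. Since $\omega\geq Cv$, we have $A\in\mathcal{A}_{pv}$ and $r_{pv}(A)\leq r_{p\omega}(A)$; combined with the trivial inequality $r_p(A)\leq r_{pv}(A)$ coming from $v\geq 1$, the theorem reduces to proving $r_{p\omega}(A)\leq r_p(A)$.

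To establish this last inequality I would mimic the $\omega_\varepsilon$-scaling trick from the proof of Theorem \ref{rwww}. Writing the admissible weight as $\omega(x)=e^{\rho(|x|)}$, where $\rho$ is concave, increasing, $\rho(0)=0$, and satisfies the GRS condition $\rho(r)/r\to 0$, set $\omega_\varepsilon(x)=e^{\rho(\varepsilon|x|)}$ for $\varepsilon\in(0,1]$. Concavity of $\rho$ through the origin forces subadditivity, so each $\omega_\varepsilon$ is submultiplicative with $1\leq\omega_\varepsilon\leq\omega$, and continuity of $\rho$ at $0$ gives $\omega_\varepsilon\to 1$ pointwise as $\varepsilon\to 0^+$; dominated convergence then yields $\|A\|_{p\omega_\varepsilon}\to\|A\|_p$. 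The strategy is to produce a comparison $\|A\|_{p\omega}\leq f(\varepsilon)\|A\|_{p\omega_\varepsilon}$ with $f(\varepsilon)^{1/n}\to 1$ as $n\to\infty$; if so, the argument of Theorem \ref{rwww} transfers verbatim and delivers $r_{p\omega}(A)\leq\|A\|_{p\omega_\varepsilon}$, after which letting $\varepsilon\to 0$ and replacing $A$ by $A^n$ gives $r_{p\omega}(A)\leq r_p(A)$.

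The principal obstacle is exactly this comparison. In Theorem \ref{rwww} it came for free from the pointwise bound $(1+r)^\delta\leq\varepsilon^{-\delta}(1+\varepsilon r)^\delta$ peculiar to the polynomial weight; for a general admissible $\omega$ the ratio $\omega(x)/\omega_\varepsilon(x)=e^{\rho(|x|)-\rho(\varepsilon|x|)}$ is unbounded in $x$, so no uniform pointwise domination exists. I would therefore replace the pointwise inequality with a Barnes-style dyadic decomposition: split $\sum_l|(A^n)_{kl}|^p\omega(k-l)^p$ into the near-diagonal part $|k-l|\leq R_n$ (where $\omega(k-l)$ is controlled by $\omega_*(R_n):=\sup_{|x|\leq R_n}\omega(x)$, and the iterated Hölder estimate from the proof of Theorem \ref{ba}, together with $\mu(\Gamma_k[R_n])\leq CR_n^d$, bounds the unweighted contribution by $\|A^n\|_1^p$ times $(CR_n^d)^\alpha$ for some $\alpha>0$) and the far-diagonal part $|k-l|>R_n$ (contributing at most $\omega_\#(R_n)^{-p}\|A^n\|_{p\omega}$ with $\omega_\#(R_n):=\inf_{|x|>R_n}\omega(x)$). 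The GRS condition then permits choosing a sequence $R_n\to\infty$ so that $\omega_*(R_n)^{1/n}\to 1$ while $\omega_\#(R_n)\to\infty$ rapidly enough to absorb the far-diagonal piece; balancing these two estimates is the technical heart of the argument.
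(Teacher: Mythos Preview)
Your overall architecture matches the paper: reduce to showing $r_{p\omega}(A)\le r_p(A)$, then invoke Theorem~\ref{ba} (with the polynomial weight $v=(1+|\cdot|)^\delta$ furnished by weak growth) to get $r_p(A)=\|A\|_{op}^p$, and finish with Theorem~\ref{hul}. The difficulty is entirely in the inequality $r_{p\omega}(A)\le r_p(A)$, and here your argument has a genuine gap.

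You correctly observe that the scaled weights $\omega_\varepsilon(x)=e^{\rho(\varepsilon|x|)}$ are \emph{not} equivalent to $\omega$, so the direct transfer of the proof of Theorem~\ref{rwww} fails. Your proposed workaround, however, does not repair this. In the dyadic splitting of $\sum_l|(A^n)_{kl}|^p\omega(k-l)^p$, the far-diagonal bound you state,
\[
\sum_{|k-l|>R_n}|(A^n)_{kl}|^p\,\omega(k-l)^p\;\le\;\omega_\#(R_n)^{-p}\,\|A^n\|_{p\omega},
\]
is simply false: the left-hand side is itself a piece of $\|A^n\|_{p\omega}$, and there is no mechanism producing the extra decay factor $\omega_\#(R_n)^{-p}$ without assuming $A$ lies in a strictly smaller algebra such as $\mathcal A_{p\omega^2}$. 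You have transplanted the estimate (\ref{ba2}) from the proof of Theorem~\ref{ba}, but there the situation is reversed: one is bounding a piece of the \emph{unweighted} norm $\|K^{n+1}\|_p$ by the \emph{weighted} norm $\|K^{n+1}\|_{p\omega}$, and the decay comes from dividing by $\omega$ on the far set. Here you are trying to bound the weighted norm itself, so the same trick yields only the circular estimate $\le\|A^n\|_{p\omega}$. No choice of $R_n$ rescues this.

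The paper avoids the problem by a different choice of auxiliary weights. Instead of $\omega_\varepsilon$, it invokes the Pytlik--Gr\"ochenig--Leinert construction (Lemma~\ref{grl8}): a sequence of admissible weights $\omega_n$ with $\omega_{n+1}\le\omega_n\le\omega$, $\omega_n\to1$ uniformly on compacta, \emph{and} $\omega\le c_n\omega_n$ for suitable constants $c_n$. The last condition makes each $\omega_n$ \emph{equivalent} to $\omega$, hence $r_{p\omega}(A)=r_{p\omega_n}(A)$ exactly, for every $n$. One then shows $\|A^k\|_{p\omega_n}\to\|A^k\|_p$ as $n\to\infty$ (Lemma~\ref{grl9}), so that
\[
r_{p\omega}(A)^k=r_{p\omega_n}(A^k)\le\|A^k\|_{p\omega_n}\xrightarrow[n\to\infty]{}\|A^k\|_p,
\]
and taking $k$-th roots gives $r_{p\omega}(A)\le r_p(A)$. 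The equivalence of $\omega_n$ with $\omega$ is precisely the missing ingredient that breaks the circularity you encountered; it is the key idea you should import.
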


We write a corollary of above theorem explicitly stating property of symmetry and inverse-closedness.

\begin{corollary}\label{grc7}
Let $\omega$ be an admissible weight satisfying the weak growth condition, i.e., $\omega(x)\geq C(1+|x|)^\delta$ for some positive constant $C$ and some $\delta\in(0,1]$. If $A\in B(\ell^2(\mathbb{Z}^d))$ satisfies the weighted Schur-type condition $$\max \Big\{ \sup_{k\in\mathbb{Z}^d} \sum_{l\in\mathbb{Z}^d} |a_{kl}|^p \omega(k-l)^p, \sup_{l\in\mathbb{Z}^d} \sum_{k\in\mathbb{Z}^d} |a_{kl}|^p\omega(k-l)^p \Big\} <\infty,$$ then the inverse matrix $A^{-1}=(b_{kl})_{k,l\in\mathbb{Z}^d}$ satisfies the same Schur-type condition $$\max \Big\{ \sup_{k\in\mathbb{Z}^d} \sum_{l\in\mathbb{Z}^d} |b_{kl}|^p \omega(k-l)^p, \sup_{l\in\mathbb{Z}^d} \sum_{k\in\mathbb{Z}^d} |b_{kl}|^p\omega(k-l)^p \Big\} <\infty.$$ If in addition $A$ is a positive operator, then the matrices corresponding to $A^\alpha$ for each $\alpha\in\mathbb{R}$ are also in $\mathcal{A}_{p\omega}$.
\end{corollary}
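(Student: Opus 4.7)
The proof splits into two parts. For the first claim, suppose $A\in\mathcal{A}_{p\omega}$ is invertible in $B(\ell^2(\mathbb{Z}^d))$. Since $\mathcal{A}_{p\omega}$ is a $\ast$-algebra, $A^\ast A\in\mathcal{A}_{p\omega}$ is self-adjoint and invertible in $B(\ell^2(\mathbb{Z}^d))$. Theorem \ref{grm} applied to $A^\ast A$ gives $\sigma_{p\omega}(A^\ast A)=\sigma(A^\ast A)$, and since $0\notin\sigma(A^\ast A)$ we conclude $(A^\ast A)^{-1}\in\mathcal{A}_{p\omega}$; hence $A^{-1}=(A^\ast A)^{-1}A^\ast\in\mathcal{A}_{p\omega}$, which is exactly the weighted Schur-type condition for $A^{-1}$.

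For the fractional powers, assume additionally that $A$ is positive. By the first part $A$ is invertible in $\mathcal{A}_{p\omega}$, so $\sigma_{p\omega}(A)=\sigma(A)\subset[m,M]$ with $0<m\le M$. My plan is to realize $A^\alpha$ through $p$-norm convergent power series, rather than via a contour integral, because Dunford-style integration is awkward in a $p$-normed setting for $p<1$ (the estimate $\|\sum c_i f(t_i^*)(t_i-t_{i-1})\|_{p\omega}\le\sum|t_i-t_{i-1}|^p\|f(t_i^*)\|_{p\omega}$ need not be uniform in the partition). Set $\lambda=(m+M)/2$ and $B=A-\lambda I\in\mathcal{A}_{p\omega}$; then $B$ is self-adjoint with $\|B\|_{op}=(M-m)/2<\lambda$, and Theorem \ref{grm} yields $r_{p\omega}(B)=\|B\|_{op}^p<\lambda^p$. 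The Taylor series
$$L:=(\log\lambda)\,I+\sum_{n=1}^\infty\frac{(-1)^{n-1}}{n\lambda^n}B^n$$
has $n$-th term of $p$-norm $(1/n)^p\lambda^{-np}\|B^n\|_{p\omega}$, and $\|B^n\|_{p\omega}^{1/n}\to r_{p\omega}(B)<\lambda^p$ forces exponential decay, giving absolute convergence in $\|\cdot\|_{p\omega}$. For any $X\in\mathcal{A}_{p\omega}$ the exponential series $\sum_{n\ge0}X^n/n!$ also converges absolutely in $\|\cdot\|_{p\omega}$, since $(1/n!)^p\|X\|_{p\omega}^n$ decays faster than any geometric rate by Stirling. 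Hence $A^\alpha:=\exp(\alpha L)\in\mathcal{A}_{p\omega}$ for every $\alpha\in\mathbb{R}$, and applying the continuous $\ast$-representation $K\mapsto K_2$ term by term identifies this element with the operator-theoretic $A^\alpha$.

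The main technical subtlety is ensuring $\|\cdot\|_{p\omega}$-convergence (rather than mere operator-norm convergence) of the logarithm series, and this is precisely what Theorem \ref{grm} buys us: without the equality $r_{p\omega}(B)=\|B\|_{op}^p$ one would only control $r_{p\omega}(B)$ by $\|B\|_{p\omega}$, which can easily exceed $\lambda^p$ and destroy convergence. Once $p$-norm convergence is in hand, agreement of $L$ with the spectral $\log A$ and of $\exp(\alpha L)$ with the spectral $A^\alpha$ is automatic, since the inclusion $\mathcal{A}_{p\omega}\hookrightarrow B(\ell^2(\mathbb{Z}^d))$ is continuous, so $\|\cdot\|_{p\omega}$-convergence passes to operator-norm convergence of the same partial sums to their usual functional-calculus limits.
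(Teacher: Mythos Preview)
Your argument is correct. For the inverse-closedness assertion you make explicit the standard passage from spectral invariance on self-adjoint elements to full inverse-closedness via $A^{-1}=(A^\ast A)^{-1}A^\ast$; the paper leaves this step implicit, simply recording that $\mathcal{A}_{p\omega}$ is symmetric and that $\sigma_{p\omega}=\sigma$ on self-adjoint elements.

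For the fractional powers your route genuinely differs from the paper's. The paper invokes the Riesz holomorphic functional calculus for $p$-Banach algebras (citing \cite{Ru} and \cite{Ze}): once $\sigma_{p\omega}(A)\subset[\delta,\infty)$ is known, one applies the calculus to $z\mapsto z^\alpha$, which is holomorphic on a neighborhood of the spectrum. You instead construct $A^\alpha$ by hand as $\exp(\alpha L)$ with $L=\log A$, realizing both the logarithm and the exponential as $\|\cdot\|_{p\omega}$-convergent power series; the key convergence of the log series is secured precisely by the spectral-radius identity $r_{p\omega}(B)=\|B\|_{op}^p$ of Theorem~\ref{grm} after centering at $\lambda=(m+M)/2$. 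Your approach is longer but entirely self-contained and sidesteps any worry about Bochner-type integration in a non-locally-convex $p$-normed space (your remark about Riemann sums is apt). The paper's approach is a one-line citation but depends on having a holomorphic functional calculus already in place for $p$-Banach algebras from the references.
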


We shall require the following two lemmas. The first one of which constructs a sequence of auxiliary weights $\omega_n$ using techniques developed in \cite{Le} and \cite{Py}.

\begin{lemma}\label{grl8}\cite[Lemma 8]{Gr}
Let $\omega$ be an unbounded admissible weight. Then there is a sequence of admissible weights $\omega_n$ such that
\begin{enumerate}
	\item $\omega_{n+1}\leq\omega_n\leq\omega$ for all $n\in\mathbb{N}$,
	\item there are constants $c_n>0$ such that $\omega\leq c_n\omega_n$, and
	\item $\lim_{n\to\infty} \omega_n=1$ uniformly on compact subsets of $\mathbb{R}^d$.
\end{enumerate}
\end{lemma}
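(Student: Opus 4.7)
The plan is to exhibit the weights $\omega_n$ explicitly via a translate-and-subtract of the concave exponent of $\omega$. Write $\omega(x)=e^{\rho(|x|)}$ with $\rho:[0,\infty)\to[0,\infty)$ continuous and concave, $\rho(0)=0$. Concavity together with $\rho(0)=0$ forces $\rho$ to be nondecreasing (if the right derivative were negative at some $t_0$, it would stay $\leq\rho'_-(t_0)<0$ by concavity and drive $\rho$ below $0$) and subadditive (the chord from $0$ lies above the graph). The GRS condition unpacks to $\rho(t)/t\to 0$ as $t\to\infty$, and unboundedness of $\omega$ gives $\rho(n)\to\infty$.

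First I would define, for each $n\in\mathbb{N}$,
\begin{equation*}
\rho_n(t) := \rho(t+n)-\rho(n), \qquad \omega_n(x) := e^{\rho_n(|x|)} \quad (x\in\mathbb{R}^d).
\end{equation*}
Each $\rho_n$ is continuous, concave (as a translate of a concave function), nonnegative (since $\rho$ is nondecreasing), and vanishes at $0$, so $\omega_n$ is an even, submultiplicative weight of the correct form with $\omega_n(0)=1$. The GRS condition for $\omega_n$ is inherited from $\omega$ because $\rho_n\leq\rho$; hence each $\omega_n$ is admissible in the sense specified in the paper.

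Next I would verify (i) and (ii) by direct computation from concavity/subadditivity of $\rho$. For (i), $\rho_n\leq\rho$ is just subadditivity $\rho(t+n)\leq\rho(t)+\rho(n)$; the monotonicity $\rho_{n+1}\leq\rho_n$ rearranges to
\begin{equation*}
\rho(t+n+1)-\rho(t+n)\leq\rho(n+1)-\rho(n),
\end{equation*}
which is exactly the statement that the unit increments of a concave function are nonincreasing. For (ii),
\begin{equation*}
\rho(t)-\rho_n(t)=\rho(t)-\rho(t+n)+\rho(n)\leq\rho(n),
\end{equation*}
using that $\rho$ is nondecreasing, so the constant $c_n:=e^{\rho(n)}$ does the job.

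Finally, for (iii), I would use a concavity estimate to show uniform smallness on compacts. For $|x|\leq R$,
\begin{equation*}
\rho_n(|x|)=\rho(|x|+n)-\rho(n)\leq |x|\,\rho'_-(n)\leq R\,\rho'_-(n),
\end{equation*}
and from concavity with $\rho(0)=0$ one has $\rho'_-(n)\leq\rho(n)/n$, which tends to $0$ by GRS. Hence $\sup_{|x|\leq R}\rho_n(|x|)\to 0$, so $\omega_n\to 1$ uniformly on $\{|x|\leq R\}$. The main obstacle here is purely the guess of the right family: once the translate-and-subtract form is written down, every clause of the lemma collapses to a one-line consequence of the concavity of $\rho$ together with the GRS condition, so no further machinery from \cite{Le} or \cite{Py} is needed.
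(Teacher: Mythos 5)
Your construction is correct, but it is genuinely different from the one the paper uses. The paper follows \cite{Gr} (going back to \cite{Le} and \cite{Py}): it sets $\gamma_n=\sup_{\mu\geq\rho^{-1}(n)}\frac{\rho(\mu)-n}{\mu}$, picks $\beta_n$ where the supremum is attained, and defines $\rho_n$ piecewise as the linear function $\gamma_n\mu$ on $[0,\beta_n]$ glued to $\rho(\mu)-n$ on $[\beta_n,\infty)$; properties (i)--(iii) are then read off from this explicit shape. Your translate-and-subtract family $\rho_n(t)=\rho(t+n)-\rho(n)$ is cleaner: every clause reduces to monotonicity, subadditivity, or the fact that increments of a concave function are nonincreasing, and your verification of each step (including $\rho'_-(n)\leq\rho(n)/n\to 0$ for the uniform convergence on compacts) is sound. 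What the paper's more elaborate construction buys is an \emph{extra} property not recorded in the statement of the lemma but invoked later: in the proof of Lemma \ref{grl9} the identity $\omega_n(x)=e^{-n}\omega(x)$ for $|x|\geq\beta_n$ is used to kill the tail $\sup_k\sum_{|k-l|\geq\beta_{n_0}}|a_{kl}|^p\omega_n(k-l)^p$ in one line. Your weights do not satisfy that exact identity, though they admit an adequate substitute (one can check that $\omega_n(x)/\omega(x)=e^{\rho(|x|+n)-\rho(|x|)-\rho(n)}$ is nonincreasing in $|x|$ and tends to $e^{-\rho(n)}$ as $|x|\to\infty$, with $\rho(n)\to\infty$ by unboundedness), so the downstream argument survives after a small rewrite. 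As a proof of Lemma \ref{grl8} as stated, your argument is complete; just be aware that if you swap in your $\omega_n$, the tail estimate in Lemma \ref{grl9} must be re-derived rather than quoted.
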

Note that all $\omega_n$ are equivalent (by (i) and (ii)) and satisfies GRS-condition (by (i)). So, $\mathcal{A}_{p\omega}$ and $\mathcal{A}_{p\omega_n}$ coincides having equivalent norms and thus for all $A\in\mathcal{A}_{p\omega}$, $$r_{p\omega}(A)=r_{p\omega_n}(A) \quad (n\in\mathbb{N}).$$
We just give an idea about the construction of $\omega_n$ as it will be required. For detailed proof refer to \cite{Gr}.

\textbf{Construction of $\omega_n$:} For $n\in\mathbb{N}$, let $$\gamma_n=\sup_{\mu\geq\rho^{-1}(n)} \frac{\rho(\mu)-n}{\mu} >0.$$ Since $\rho$ is continuous and $\lim_{n\to\infty} \frac{\rho(\mu)-n}{\mu}=0$, there is some $\beta_n\geq\rho^{-1}(n)$ such that $$\gamma_n=\frac{\rho(\beta_n)-n}{\beta_n}.$$ Define $\rho_n:[0,\infty)\to[0,\infty)$ by \begin{align*} \rho_n(\mu)=\begin{cases} \gamma_n\mu, \ &0\leq\mu\leq\beta_n, \\ \rho(\mu)-n, \ &\mu\geq\beta_n. \end{cases} \end{align*}
Define corresponding weight $\omega_n$ by $$\omega_n(x)=e^{\rho_n(|x|)} \quad (x\in\mathbb{R}^d).$$

\begin{lemma}\label{grl9}
With the assumptions of Theorem $\ref{grm}$ and $\omega_n$ as in Lemma $\ref{grl8}$, for every $A=A^\ast\in\mathcal{A}_{p\omega}$, $$\lim_{n\to\infty} \|A\|_{p\omega_n}=\|A\|_{p}$$ and \begin{equation}\label{gr23} r_{p\omega}(A)=r_p(A)=\|A\|_{op}^p. \end{equation}
\end{lemma}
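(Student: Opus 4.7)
The plan is to establish the norm convergence first, then to deduce the spectral identity by combining the spectral radius bound with Barnes' lemma (Theorem \ref{ba}) applied to an auxiliary polynomial weight.

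For $\|A\|_{p\omega_n}\to\|A\|_p$, the inequality $\|A\|_{p\omega_n}\geq\|A\|_p$ is immediate from $\omega_n\geq 1$. For the matching upper bound I would fix $\varepsilon>0$ and split each row (or column) sum into the three ranges $|k-l|\leq R$, $R<|k-l|\leq\beta_n$, and $|k-l|>\beta_n$, using the explicit form of $\omega_n$ from the construction. On the first range, property (iii) of Lemma \ref{grl8} gives $\omega_n(k-l)^p\leq 1+\varepsilon$ for $n$ large, bounding the contribution by $(1+\varepsilon)\|A\|_p$. On the third range the construction gives $\omega_n(x)^p=e^{-np}\omega(x)^p$, so its contribution is at most $e^{-np}\|A\|_{p\omega}\to 0$. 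The middle range is the delicate step: since $\omega_n(m)=e^{\gamma_n m}$ there, the ratio $\omega_n(m)^p/\omega(m)^p=e^{p(\gamma_n m-\rho(m))}$ has exponent $\gamma_n m-\rho(m)$ with derivative $\gamma_n-\rho'(m)<0$ on $[0,\beta_n)$ (because $\gamma_n=\rho'(\beta_n)$ and $\rho$ is concave), hence is decreasing. Its maximum on $[R,\beta_n]$ is therefore $e^{p(\gamma_n R-\rho(R))}$, so the middle contribution is bounded by $e^{p(\gamma_n R-\rho(R))}\|A\|_{p\omega}$. Sending $n\to\infty$ (so $\gamma_n\to 0$), then $R\to\infty$ (using $\rho\to\infty$ in the unbounded case; the bounded case is trivial since $\omega$ is then equivalent to the trivial weight), and finally $\varepsilon\to 0$, yields $\limsup_n\|A\|_{p\omega_n}\leq\|A\|_p$.

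For the spectral equality, the remark following Lemma \ref{grl8} gives $r_{p\omega}(A)=r_{p\omega_n}(A)$ for every $n$. Applying the spectral radius bound to the self-adjoint element $A^m$ (note $(A^m)^\ast=A^m$ since $A=A^\ast$) together with $r_{p\omega}(A)^m=r_{p\omega}(A^m)$ produces $r_{p\omega}(A)\leq\|A^m\|_{p\omega_n}^{1/m}$ for all $m,n$. Letting $n\to\infty$ via the convergence just established gives $r_{p\omega}(A)\leq\|A^m\|_p^{1/m}$, and $m\to\infty$ yields $r_{p\omega}(A)\leq r_p(A)$; the reverse inequality is immediate from $\omega\geq 1$, so $r_{p\omega}(A)=r_p(A)$.

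To identify both with $\|A\|_{op}^p$, I would introduce the auxiliary weight $\tilde\omega(k,l)=(1+|k-l|)^\delta$ on $\mathbb{Z}^d\times\mathbb{Z}^d$. The weak growth condition $\omega(k-l)\geq C(1+|k-l|)^\delta$ gives $A\in\mathcal{A}_{p\tilde\omega}$, and Theorem \ref{ba}, applied to the metric space $(\mathbb{Z}^d,|\cdot|)$ with counting measure (which satisfies the polynomial growth assumption with $b=d$), produces $\sigma_{p\tilde\omega}(A)=\sigma(A_2)$ and hence $r_{p\tilde\omega}(A)=\|A\|_{op}^p$. Since $\tilde\omega\geq 1$, one has $r_p(A)\leq r_{p\tilde\omega}(A)=\|A\|_{op}^p$, while Lemma \ref{ri} applied to the continuous $\ast$-representation $A\mapsto A_2$ of $\mathcal{A}_p$ on $\ell^2(\mathbb{Z}^d)$ gives the reverse $\|A\|_{op}^p\leq r_p(A)$. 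Chaining these estimates yields (\ref{gr23}). The main obstacle is the middle-range estimate in the norm convergence, which requires careful use of the concavity of $\rho$ and the identity $\gamma_n=\rho'(\beta_n)$ built into the construction of Lemma \ref{grl8}.
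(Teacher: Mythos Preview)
Your argument is correct, but the norm-convergence step is carried out differently from the paper, and more laboriously. You split the row sum into three ranges $|k-l|\leq R$, $R<|k-l|\leq\beta_n$, and $|k-l|>\beta_n$, and then handle the middle range by a monotonicity computation based on concavity and the identity $\gamma_n=\rho'(\beta_n)$. The paper instead uses a two-range split: it first chooses $n_0$ so that the tail $|k-l|\geq\beta_{n_0}$ contributes at most $e^{-pn_0}\|A\|_{p\omega}<\varepsilon$ (since $\omega_{n_0}=e^{-n_0}\omega$ there), and then---this is the point you are not exploiting---uses the monotonicity $\omega_n\leq\omega_{n_0}$ from Lemma~\ref{grl8}(i) to keep that same tail bound for all $n\geq n_0$. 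The head $|k-l|\leq\beta_{n_0}$ is now a \emph{fixed} compact set, so uniform convergence $\omega_n\to 1$ there finishes the estimate. This avoids your ``delicate'' middle range entirely and does not rely on differentiability of $\rho$ or on $\gamma_n\to 0$. Your approach does yield explicit quantitative control of $\omega_n/\omega$ on the whole linear piece of $\rho_n$, which could be useful elsewhere, but for the lemma at hand the paper's two-range split via property~(i) is cleaner.

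The spectral-radius portion of your proposal matches the paper's reasoning exactly. In the final identification with $\|A\|_{op}^p$, the paper simply invokes the equality $r_{p\tau_\delta}(A)=r_p(A)=\|A\|_{op}^p$ already established inside the proof of Theorem~\ref{ba}, so your separate appeal to Lemma~\ref{ri} for the inequality $\|A\|_{op}^p\leq r_p(A)$ is redundant but not wrong.
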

\begin{proof}
Let $\epsilon>0$. Let $A=A^\ast\in\mathcal{A}_{p\omega}$. Then $$\|A\|_{p\omega_n}=\sup_k\sum_l|a_{kl}|^p\omega_n(k-l)^p<\infty.$$ By construction of $\omega_n$, $\omega_n(x)=e^{-n}\omega(x)$ for all $|x|\geq\beta_n$. So, there is $n_0\in\mathbb{N}$ such that $$\sup_k\sum_{l:|k-l|\geq\beta_{n_0}}|a_{kl}|^p\omega_{n_0}(k-l)^p\leq e^{-pn_0}\|A\|_{p\omega}<\epsilon.$$ Since $\omega_{n+1}\leq\omega_n\leq\omega$ for all $n$, if $n\geq n_0$, then $$\sup_k\sum_{l:|k-l|\geq\beta_{n_0}}|a_{kl}|^p\omega_n(k-l)^p<\epsilon.$$ Now, if $|x|\leq\beta_{n_0}$, then $\omega_n\to1$ uniformly and so there is $n_1\in\mathbb{N}$ such that for $n\geq n_1$, $$\sup_k \sum_{l:|k-l|\leq\beta_{n_0}} |a_{kl}|^p\omega_n(k-l)^p \leq (1+\epsilon^p) \sup_k \sum_l |a_{kl}|^p. $$ So, we have $$\|A\|_{p\omega_n} \leq \epsilon + (1+\epsilon^p) \|A\|_p.$$ Thus, $\lim_{n\to\infty} \|A\|_{p\omega_n}\leq\|A\|_p$. Since $\omega_n\geq1$, reverse inequality always holds.

Since $\omega$ and $\omega_n$ are equivalent weights for all $n\in\mathbb{N}$, $$r_{p\omega}(A)^k=r_{p\omega}(A^k)=r_{p\omega_n}(A^k)\leq\|A^k\|_{p\omega_n} \quad (k,n\in\mathbb{N}).$$ So, $$r_{p\omega}(A)^k\leq\lim_{n\to\infty}\|A^k\|_{p\omega_n}=\|A^k\|_p \quad (k\in\mathbb{N})$$ and this gives $r_{p\omega}(A)\leq r_p(A)$. Since $\mathcal{A}_{p\omega}\subset\mathcal{A}_p$, $r_p(A)\leq r_{p\omega}(A)$ is always true. Now, as $\omega(x)\geq C(1+|x|)^\delta=\tau_\delta(x)$ and $0<\delta\leq1$, $\mathcal{A}_{p\omega}\subset\mathcal{A}_{p\tau_\delta}$, and so by Theorem \ref{ba}, $r_{p}(A)=\|A\|_{op}^p$. This completes the proof.
\end{proof}

\begin{proof}[Proof of Theorem $\ref{grm}$ and Corollary $\ref{grc7}$]
Combining Theorem \ref{hul} with (\ref{gr23}), we get $\sigma_{p\omega}(A)=\sigma(A)$ for all $A\in\mathcal{A}_{p\omega}$ and the symmetry of $\mathcal{A}_{p\omega}$ follows.

Now, if $A\in\mathcal{A}_{p\omega}$ is an invertible positive operator in $B(\ell^2(\mathbb{Z}^d))$, then $\sigma(A)\subset[\delta,\infty)$ for some $\delta>0$ and it follows that $\sigma_{p\omega}(A)\subset[\delta,\infty)$. The theorem follows from Riesz functional calculus (see \cite{Ru} and \cite{Ze}).
\end{proof}

\subsection{Wiener's Lemma for Twisted Convolution}

\begin{definition}\cite{Le}
Let $\theta>0$. The \emph{twisted convolution} of two sequences $a=(a_{kl})_{k,l\in\mathbb{Z}^d}$ and  $b=(b_{kl})_{k,l\in\mathbb{Z}^d}$ is defined as \begin{equation}\label{gr29} (a\star_\theta b)(m,n) = \sum_{k,l\in\mathbb{Z}^d} a_{kl}b_{m-k,n-l}e^{2\pi i\theta(m-k)\cdot l} = \sum_{k,l\in\mathbb{Z}^d} a_{m-k,n-l}b_{kl}e^{2\pi i\theta k\cdot(n-l)}.\end{equation}
\end{definition}

Let $0<p\leq1$, and let $q\geq 1$. Since $$\|a\star_\theta b\|_q \leq \||a|\star|b|\|_q \leq \|a\|_1\|b\|_q \leq \|a\|_p^\frac{1}{p}\|b\|_q,$$ the twisted convolution operator $L_a(b)=a\star_\theta b$ is in $B(\ell^q(\mathbb{Z}^{2d}))$ for any $a\in\ell^p(\mathbb{Z}^{2d})$.

In this section we consider the space $\ell^p(\mathbb{Z}^{2d})$ with twisted convolution as product and involution $a^\ast_{kl}=\overline{a_{-k,-l}}e^{2\pi i \theta k\cdot l}$ for $a=(a_{kl})_{k,l\in\mathbb{Z}^d}\in\ell^p(\mathbb{Z}^{2d})$.

\begin{theorem}
Let $0<p\leq1$, $\omega$ be an admissible weight satisfying weak growth condition, and let $a\in\ell^p(\mathbb{Z}^{2d},\omega)$ be such that the twisted convolution operator $L_a$ is invertible in $B(\ell^2(\mathbb{Z}^{2d}))$. Then $a$ is invertible in $\ell^p(\mathbb{Z}^{2d},\omega)$ and $L_a^{-1}=L_b$ for some $b\in\ell^p(\mathbb{Z}^{2d},\omega)$.
\end{theorem}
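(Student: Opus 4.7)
The approach is to reduce this twisted-convolution statement to the matrix version from Section~4.1 by isometrically $*$-embedding $(\ell^p(\mathbb Z^{2d},\omega),\star_\theta)$ into the infinite-matrix algebra $\mathcal A_{p\omega}(\mathbb Z^{2d})$ indexed by $\mathbb Z^{2d}$. Given $a=(a_{kl})$, associate the matrix
$$M(a)_{(m,n),(k,l)}=a_{m-k,n-l}\,e^{2\pi i\theta k\cdot(n-l)};$$
acting on $\ell^2(\mathbb Z^{2d})$ this is exactly the operator $L_a$, so $\|M(a)\|_{op}=\|L_a\|_{op}$. Using translation-invariance of the $(k,l)$-sum, one checks $\|M(a)\|_{p\omega}=\|a\|_{p\omega}$: both the row-sup and column-sup in $\mathcal A_{p\omega}(\mathbb Z^{2d})$ collapse to the same weighted sum because, after the substitution $u=m-k,v=n-l$, the inner sum no longer depends on the outer index.

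Next I would verify that $M$ is a $*$-algebra homomorphism. The identity $M(a^*)=M(a)^*$ is an index manipulation using $a^*_{kl}=\overline{a_{-k,-l}}e^{2\pi i\theta k\cdot l}$. The core computation is multiplicativity $M(a)M(b)=M(a\star_\theta b)$: substituting $r=k+p,\;s=l+q$ in the matrix product and comparing phases reduces this to the cocycle identity
$$(k+p)\cdot(n-l-q)+k\cdot q=k\cdot(n-l)+p\cdot(n-l-q),$$
which is just bilinearity of the dot product packaged as the twisting $2$-cocycle.

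With the embedding established, for any $a=a^*\in\ell^p(\mathbb Z^{2d},\omega)$ the isometry yields $\|a^n\|_{p\omega}=\|M(a)^n\|_{p\omega}$, hence $r_{p\omega}(a)=r_{p\omega}(M(a))$. Applying Theorem~\ref{grm} inside the matrix algebra gives $r_{p\omega}(M(a))=\|M(a)\|_{op}^p=\|L_a\|_{op}^p$. Consequently $L:a\mapsto L_a$ is a faithful $*$-representation of the $p$-Banach $*$-algebra $\ell^p(\mathbb Z^{2d},\omega)$ on $\ell^2(\mathbb Z^{2d})$ satisfying the spectral-radius hypothesis of Hulanicki's lemma (Theorem~\ref{hul}), and we deduce $\sigma_{\ell^p(\mathbb Z^{2d},\omega)}(a)=\sigma(L_a)$ for every self-adjoint $a$; in particular $\ell^p(\mathbb Z^{2d},\omega)$ is symmetric.

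To finish, since $L_a$ is invertible in $B(\ell^2(\mathbb Z^{2d}))$ the positive operators $L_{a^*\star_\theta a}=L_a^*L_a$ and $L_{a\star_\theta a^*}=L_aL_a^*$ are also invertible, so $0$ is not in their $B(\ell^2)$-spectrum, and by the previous paragraph not in their algebra spectrum either. If $c=(a^*\star_\theta a)^{-1}$ and $d=(a\star_\theta a^*)^{-1}$ (inverses in $\ell^p(\mathbb Z^{2d},\omega)$), then $c\star_\theta a^*$ is a left inverse and $a^*\star_\theta d$ a right inverse of $a$; associativity forces them to coincide, producing $a^{-1}\in\ell^p(\mathbb Z^{2d},\omega)$, and $L_{a^{-1}}=L_a^{-1}$ follows because $L$ is a homomorphism, giving the promised $b=a^{-1}$. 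I expect the main technical obstacle to be the multiplicativity verification for $M$; once that cocycle computation is in hand, every remaining ingredient is already assembled in Sections~2--4.1.
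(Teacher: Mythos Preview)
Your argument is correct and runs along a genuinely different track than the paper's. Both start from the same matrix $A=M(a)$ with $A_{(m,n),(k,l)}=a_{m-k,n-l}e^{2\pi i\theta k\cdot(n-l)}$ and the norm identity $\|A\|_{p\omega}=\|a\|_{p\omega}$, but the paper does \emph{not} verify that $M$ is a $\ast$-homomorphism. Instead it works at the operator level: since $A$ is the matrix of $L_a$ and $L_a$ is invertible, Corollary~\ref{grc7} gives $B=A^{-1}\in\mathcal A_{p\omega}$, and the remaining task is to show that $B$ is again the matrix of a twisted-convolution operator. This is done by choosing $b\in\ell^2(\mathbb Z^{2d})$ with $L_ab=\delta_0$, checking $L_a(L_b-B)c=0$ for $c\in c_{00}$, and using density to conclude that $B$ and $L_b$ have the same matrix; the norm identity then forces $b\in\ell^p(\mathbb Z^{2d},\omega)$. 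Your approach front-loads the cocycle computation establishing multiplicativity of $M$, which then lets you transport spectral radii and apply Hulanicki's lemma \emph{inside} $\ell^p(\mathbb Z^{2d},\omega)$, deducing symmetry of that algebra as a bonus and finishing via the standard $a^\ast\star_\theta a$, $a\star_\theta a^\ast$ trick. The paper's route is shorter because it bypasses the homomorphism verification, while yours is more structural and yields the stronger conclusion that $(\ell^p(\mathbb Z^{2d},\omega),\star_\theta)$ is itself symmetric and inverse-closed in $B(\ell^2)$.
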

\begin{proof}
For $L_a\in B(\ell^2(\mathbb{Z}^{2d}))$, by (\ref{gr29}), the matrix $A$ associated with it has the entries $$A_{(k,l),(m,n)}=a_{m-k,n-l}e^{2\pi i\theta k\cdot(n-l)}.$$ Now, \begin{equation}\label{gr31} \sup_{(k,l)\in\mathbb{Z}^{2d}} \sum_{(m,n)\in\mathbb{Z}^{2d}} |A_{(k,l),(m,n)}|^p \omega(k-m,l-n)^p=\|a\|_{p\omega}<\infty, \end{equation} and likewise with index interchanged. This gives $\|A\|_{p\omega}=\|a\|_{p\omega}$ and $A\in\mathcal{A}_{p\omega}$. By Theorem \ref{grm}, $B=A^{-1}\in\mathcal{A}_{p\omega}$. So, it remains to show that there is some $b\in\ell^p(\mathbb{Z}^{2d},\omega)$ such that $B=L_b$. Let $b\in\ell^2(\mathbb{Z}^{2d})$ be such that $L_ab=\delta_0$ where $\delta_0(0)=1$ and $\delta_0(m)=0$ for non-zero $m\in\mathbb{Z}^{2d}$. Let $c\in c_{00}=\{d=(d_{kl})_{k,l\in\mathbb{Z}^d}: \text{supp(d) is finite}\}$. Then $$L_a(L_b-B)c=a\star_\theta(b\star_\theta c)-L_aL_a^{-1}c=c-c=0.$$ So, $L_b=B$ on $c_{00}$. Since $c_{00}$ is dense in $\ell^2(\mathbb{Z}^{2d})$, it follows that the matrix of $L_a$ and $B$ are same and by (\ref{gr31}), $b\in\ell^p(\mathbb{Z}^{2d},\omega)$. The rest follows.
\end{proof}

\bibliographystyle{amsplain}

\end{document}